\newtheorem{proposition}{Proposition}
\newtheorem{theorem}[proposition]{Theorem}
\newtheorem{corollary}[proposition]{Corollary}
\theoremstyle{remark}
\theoremstyle{definition}
\numberwithin{equation}{section}
\numberwithin{proposition}{section}
\numberwithin{figure}{section}
\numberwithin{table}{section}
\newcommand{\N}{\mathbb{N}}
\newcommand{\R}{\mathbb{R}}
\newcommand{\E}{\mathbb{E}}
\newcommand{\ep}{\varepsilon}
\newcommand{\eps}{\varepsilon}
\renewcommand{\le}{\leqslant}
\renewcommand{\ge}{\geqslant}
\renewcommand{\leq}{\leqslant}
\renewcommand{\geq}{\geqslant}
\newcommand{\la}{\left\langle}
\newcommand{\ra}{\right\rangle}
\newcommand{\Ll}{\left}
\newcommand{\Rr}{\right}
\renewcommand{\d}{\mathrm{d}}
\DeclareMathOperator{\supp}{supp}
\renewcommand{\tilde}{\widetilde}
\renewcommand{\hat}{\widehat}
\newcommand{\1}{\mathds{1}}
\newcommand{\mcl}{\mathcal}
\newcommand{\mfk}{\mathfrak}
\newcommand{\al}{\alpha}
\newcommand{\de}{\delta}
\newcommand{\si}{\sigma}
\newcommand{\dr}{\partial}
\newcommand{\FF}{\mathsf{F}}
\newcommand{\Pnl}{\Phi_{\nu,\lambda}}
\newcommand{\err}{{\mathsf{err}_N}}
\renewcommand{\b}{\mathbf{b}}
\newsavebox\myboxA
\newsavebox\myboxB
\newlength\mylenA
\newcommand*\mybar[2][0.75]{%
    \sbox{\myboxA}{$\m@th#2$}%
    \setbox\myboxB\null
    \ht\myboxB=\ht\myboxA%
    \dp\myboxB=\dp\myboxA%
    \wd\myboxB=#1\wd\myboxA
    \sbox\myboxB{$\m@th\overline{\copy\myboxB}$}
    \setlength\mylenA{\the\wd\myboxA}
    \addtolength\mylenA{-\the\wd\myboxB}%
    \ifdim\wd\myboxB<\wd\myboxA%
       \rlap{\hskip 0.5\mylenA\usebox\myboxB}{\usebox\myboxA}%
    \else
        \hskip -0.5\mylenA\rlap{\usebox\myboxA}{\hskip 0.5\mylenA\usebox\myboxB}%
    \fi}
\newcommand{\rpc}{\hspace{0.2mm}\d\mathfrak{R}}
\newcommand{\zdm}{{\zeta_\mu}}
\newcommand{\e}{{\mathbb{E}}}
\newcommand{\Reals}{\mathbb{R}}
\newcommand{\DD}{\mathcal{D}}
\newcommand{\LL}{\mathcal{L}}
\newcommand{\MM}{\mathcal{M}}
\newcommand{\PP}{\mathcal{P}}
\newcommand{\bigO}[1]{\ensuremath{\mathop{}\mathopen{}\mathcal{O}\mathopen{}\left(#1\right)}}
\begin{document}

\author[J.-C. Mourrat]{J.-C. Mourrat}
\address[J.-C. Mourrat]{Courant Institute of Mathematical Sciences, New York University, New York, New York, USA}
\email{jcm777@nyu.edu}

\author[D. Panchenko]{D. Panchenko}
\address[D. Panchenko]{Department of Mathematics, University of Toronto, Toronto, Ontario, Canada}
\email{panchenk@math.toronto.edu}

\keywords{spin glass, Hamilton-Jacobi equation, Parisi formula}
\subjclass[2010]{82B44, 82D30}
\date{\today}

\title{Extending the Parisi formula along a Hamilton-Jacobi equation}

\begin{abstract}
We study the free energy of mixed $p$-spin spin glass models enriched with an additional magnetic field given by the canonical Gaussian field associated with a Ruelle probability cascade. We prove the conjecture in \cite{Mourrat19} that this free energy converges to the Hopf-Lax solution of a certain Hamilton-Jacobi equation. Using this result, we give a new representation of the free energy of mixed $p$-spin models with soft spins.
\end{abstract}

\maketitle

%
%
%
%
%
%

\section{Introduction}
\label{s.intro}

Let $(\beta_p)_{p \ge 2}$ be a sequence of real numbers and let $\xi(r) := \sum_{p \ge 2} \beta_p^2 r^p$ for $r \in \R$. We will assume that the sequence $(\beta_p)_{p \ge 2}$ is such that $\xi$ is well defined on the entire real line. This assumption can be relaxed if needed by restricting the parameters of the models we will be working with.
Denote by $(H_N(\sigma))_{\sigma \in \R^N}$ the centered Gaussian field with covariance
\begin{equation*}  
\E \Ll[ H_N(\sigma) \, H_N(\tau) \Rr] = N \xi \Bigl( \frac{\sigma \cdot \tau}{N} \Bigr), \qquad \sigma,\tau \in \R^N.
\end{equation*}
Let $P_N := P_1^{\otimes N}$ denote the $N$-fold product of $P_1$, a probability measure on $\R$ with bounded support. We aim to study the Gibbs measure built with respect to the energy function $H_N(\sigma)$ and the reference measure $P_N$. A quantity of fundamental interest is the limit free energy
\begin{equation}  
\label{e.lim.free.energy}
\lim_{N \to \infty} \frac 1 N \E \log  \int\! \exp \Ll( H_N(\sigma) \Rr)\, \d P_N(\sigma).
\end{equation}
When the support of $P_1$ is $\{\pm 1\}$ and $\xi(r)=\beta^2 r^2$ (the Sherrington-Kirkpatrick model~\cite{SK}), this limit was  discovered by Parisi in a celebrated work \cite{Parisi79, Parisi}; see also~\cite{MPV}. The formula was then proved rigorously for general $\xi$ in \cite{Guerra, TPF, PanPF}, and was later extended to the current setting where we only assume that the support of $P_1$ is bounded in \cite{Pan05, SKvector}.

\smallskip

In order to further our understanding of this object, it was proposed in \cite{Mourrat19} (following \cite{HJinfer, HJrank}) to recast the limit free energy \eqref{e.lim.free.energy} as a particular value of the solution of a Hamilton-Jacobi equation. This solution depends on two parameters $t \ge 0$ and $\mu \in \MM(\R_+)$, where $\MM(\R_+)$ denotes the set of Borel probability measures over $\R_+$. It was conjectured that an enriched version of the free energy, which would depend additionally on the parameters $t \ge 0$ and $\mu \in \MM(\R_+)$, may converge to the same solution evaluated at these parameters. 

\smallskip

The main purpose of this paper is to prove this conjecture. In order to state the result, we start by defining the enriched model precisely. We denote by $\MM_b(\R_+)$ the subset of $\MM(\R_+)$ of measures with bounded support. By \cite[Theorem~2.17]{SKmodel}, one can associate a Ruelle probability cascade \cite{RPC} to each probability measure on $[0,1]$; this Ruelle probability cascade is a random probability measure on the unit ball of a Hilbert space. We denote by $\mathfrak{R}$ the Ruelle probability cascade corresponding to the uniform distribution over $[0,1]$, and by $(\alpha^\ell)_{\ell\geq 1}$ an i.i.d.\ sample from~$\mfk R$ (that is, the law of $(\alpha^\ell)_{\ell\geq 1}$ is $\mfk R^{\otimes \infty}$). In particular, the law of the overlap $\alpha^1\cdot\alpha^2$ under $\e \mfk R^{\otimes 2}$ is the uniform distribution over $[0,1]$. Given a measure $\mu\in \MM_b(\R_+)$, and conditionally on $\mfk R$, let $z^\mu(\alpha)$ be a Gaussian process indexed by $\alpha\in\supp(\mfk R)$ with  covariance 
$$
\e \Ll[ z^\mu(\alpha^1)z^\mu(\alpha^2) \Rr]= \mu^{-1}(\alpha^1\cdot\alpha^2), \qquad  \al^1, \al^2 \in \supp(\mfk R).
$$ 
In the expression above and throughout the paper, we use the shorthand notation, for every $r \in [0,1]$, 
\begin{equation}  
\label{e.def.mu-1}
\mu^{-1}(r) := \inf \{s \ge 0 \ : \ \mu([0,s]) \ge r\}.
\end{equation}
To check that the Gaussian process $z^\mu(\al)$ exists, it suffices to verify   that $\mu^{-1}(\alpha^1\cdot\alpha^2)$ is a positive semidefinite kernel on $(\supp \mfk R)^2$, and this follows from the fact that the support of $\mfk R$ is ultrametric. Moreover, $\mu^{-1}(r)$ is left-continuous and, thus, continuous at $r=1$, which implies that the process $z^\mu(\alpha)$ is stochastically continuous on $\supp(\mfk R)$. As a result, it is jointly measurable (see e.g.\ \cite[Theorem 3.3.1]{Skor}) and we can define, for every $t \ge 0$ and $\mu \in \MM_b(\R_+)$, 
\begin{multline}  \label{FEth}
  F_N(t,\mu) := \frac 1 N \E \log \iint  \!\exp \Big( \sqrt{t} H_N(\sigma) + \sum_{i=1}^{N}\sigma_i z_i^\mu(\alpha)
\\
- \frac{1}{2}Nt\xi(N^{-1} |\si|^2) - \frac{1}{2}\mu^{-1}(1)|\si|^2 \Big)  \, \d P_N(\sigma) \rpc(\alpha),
\end{multline}
where $z_i^\mu(\alpha)$ are independent copies of $z^\mu(\alpha)$ for $i\geq 1$ (conditionally on $\mfk R$ and independent of $H_N$). 
For a measure of the form
\begin{equation}  
\label{e.music}
\mu = \sum_{\ell=0}^{k} (\zeta_{\ell+1}-\zeta_\ell)\delta_{q_\ell}
\end{equation}
with
\begin{equation}
0=\zeta_0<\zeta_1<\ldots<\zeta_k<\zeta_{k+1}=1, \qquad 
0\leq q_0<q_1<\ldots<q_k<\infty,
\label{music}
\end{equation}
one can rewrite $F_N(t,\mu)$ in the more familiar form
\begin{multline} \label{FE}
  F_N(t,\mu) = \frac 1 N \E \log \int_{\R^N} \sum_{\al \in \N^k} \exp \Big( \sqrt{t} H_N(\sigma) + \sum_{i=1}^{N}\sigma_iz_{\alpha,i}
\\
- \frac{1}{2}Nt\xi(N^{-1} |\si|^2) - \frac{1}{2}\mu^{-1}(1)|\si|^2 \Big)\, v_\al \, \d P_N(\sigma),
\end{multline}
where $(v_\alpha)_{\al \in \N^k}$ are the weights of the Ruelle probability cascade  with parameters $(\zeta_\ell)_{1\leq\ell\leq k}$, and $(z_{\alpha,i})_{i \ge 1}$ are independent copies of the Gaussian process with the covariance $\e z_{\alpha^1}z_{\alpha^2}= q_{\alpha^1\wedge \alpha^2}$, where $\alpha^1\wedge \alpha^2=\max\{\ell\geq 0\,:\, \alpha_j^1=\alpha^2_j \mbox{ for } j\leq \ell\}.$ The quantities (\ref{FEth}) and (\ref{FE}) are equal in this case, because, by (the proof of) \cite[Theorem~1.3]{SKmodel} and standard properties of the Ruelle probability cascades, see \cite[Theorem~4.4]{SKmodel}, both quantities are equal to the same continuous functional of the distribution of the array $(\mu^{-1}(\alpha^\ell\cdot\alpha^{\ell'}))_{\ell,\ell'\geq 1}$ under $\e \mfk R^{\otimes \infty}$ and correspondingly of the array $(q_{\alpha^\ell\wedge \alpha^{\ell'}})_{\ell,\ell'\geq 1}$ under $\e (\sum_{\al \in \N^k} v_\alpha \delta_\alpha)^{\otimes \infty}$; and these distributions are equal, due to the property of the Ruelle probability cascades that the distribution of an overlap array is determined by the distribution of one overlap. Moreover, denoting by $D \ge 0$ the smallest real number such that the support of $P_1$ is contained in $[-\sqrt{D}, \sqrt{D}]$, one can check (see for instance \cite{PM}, \cite[Proposition~2.1]{Mourrat19}, or Subsection~\ref{SecLipc} below) that for every $\mu,\nu \in \MM_b(\R_+)$, 
\begin{equation}
|F_N(t,\mu) - F_N(t,\nu)|\leq \frac{D}{2}\int_\R |\mu(s)-\nu(s)|\,\d s.
\label{FElip}
\end{equation}
In view of this, we can whenever convenient replace the measure $\mu$ by an atomic measure. Finally, using again standard properties of the Ruelle probability cascades (see e.g.\ \cite[Theorem 14.2.1]{SG} or \cite[Theorem~2.9]{SKmodel}), one can verify that $F_N(0,\mu)$ does not depend on $N$; we denote this quantity by
\begin{equation}  
\label{e.psi.f1}
\psi(\mu) := F_N(0,\mu) = F_1(0,\mu).
\end{equation}
We will recall a somewhat more explicit expression for $\psi(\mu)$ in (\ref{e.def.psi}) below. Denote by $U$ a uniform random variable over $[0,1]$, and for every probability measure $\mu$ on~$\R_+$, define $X_\mu := \mu^{-1}(U)$, 
where we recall that $\mu^{-1}$ is defined in \eqref{e.def.mu-1}. We also define, for every $s \in \R$,
\begin{equation*}  
\xi^*(s) := \sup_{r \ge 0} \Ll( rs - \xi(r) \Rr) .
\end{equation*}
Our first goal is to prove the following conjecture from \cite{Mourrat19} (specialized to the case where $P_N$ is a product measure). 
\begin{theorem}\label{t.main}
For every $t \ge 0$ and $\mu \in \mcl \MM_b(\R_+)$,
\begin{equation}  
\label{e.conj.h}
\lim_{N\to \infty}   F_N(t,\mu) 
= \inf_{\nu \in \mcl \MM_b(\R_+)} \Ll( \psi(\nu) + \frac{t}{2} \E   \xi^* \Ll( \frac{X_\nu - X_\mu}{t} \Rr)   \Rr) .
\end{equation}
\end{theorem}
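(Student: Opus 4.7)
The right-hand side of \eqref{e.conj.h} is the Hopf--Lax representation, evaluated at $(t,\mu)$, of the viscosity solution to a Hamilton--Jacobi equation on $\MM_b(\R_+)$ with initial data $\psi$ and Hamiltonian determined by $\xi$, as conjectured in \cite{Mourrat19}. I would prove the matching inequalities $\le$ and $\ge$. The easier direction ($\le$) is handled by a Guerra-type interpolation which, for each candidate $\nu \in \MM_b(\R_+)$, delivers an upper bound of the form $F_N(t,\mu) \le \psi(\nu) + \tfrac{t}{2}\E\,\xi^*((X_\nu - X_\mu)/t) + o_N(1)$. The harder direction ($\ge$) leverages the generalized Parisi formula of \cite{Pan05, SKvector} applied to the enriched model with atomic $\mu$, which I reduce to via the Lipschitz estimate \eqref{FElip}.

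\textbf{Upper bound via Guerra interpolation.} Fix $\nu \in \MM_b(\R_+)$. I would construct an interpolation $s \in [0,1] \mapsto \phi_N(s)$ with $\phi_N(0) = F_N(t,\mu)$ and $\phi_N(1)$ equal, up to $o_N(1)$, to $\psi(\nu) + \tfrac{t}{2}\E\,\xi^*((X_\nu - X_\mu)/t)$, by gradually transferring the randomness of the disorder $\sqrt{t}\,H_N(\sigma)$ into a cascade-based perturbation whose covariance kernel interpolates between $\mu^{-1}$ and $\nu^{-1}$; the quadratic compensators in \eqref{FEth} must be updated along the interpolation so as to cancel the diagonal $R_{11}$ contributions produced by Gaussian integration by parts on both $H_N$ and on the cascade field. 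Differentiating $\phi_N(s)$ then gives
\begin{equation*}
\phi_N'(s) = \tfrac{1}{2}\,\E\,\bigl\langle t\,\xi(R_{12}) - a_s(\alpha^1\cdot\alpha^2)\,R_{12} + t\,\xi^*(a_s(\alpha^1\cdot\alpha^2)/t)\bigr\rangle + o_N(1),
\end{equation*}
with $a_s$ an interpolation of $\mu^{-1}$ and $\nu^{-1}$. By Young's inequality $t\,\xi(q) - qa \ge -t\,\xi^*(a/t)$ the integrand is pointwise nonnegative, so $\phi_N'(s) \ge 0$ and hence $\phi_N(0) \le \phi_N(1)$, yielding the upper bound; the uniform law of $\alpha^1\cdot\alpha^2$ on $[0,1]$ under $\E\,\mfk R^{\otimes 2}$ turns the cumulative $\xi^*$-contribution into $\E\,\xi^*((X_\nu - X_\mu)/t)$ upon integrating $s\in[0,1]$.

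\textbf{Lower bound and main obstacle.} By \eqref{FElip} I may assume $\mu$ is atomic as in \eqref{e.music}, in which case \eqref{FE} presents $F_N(t,\mu)$ as the free energy of a mixed $p$-spin model endowed with a $(k+1)$-level Ruelle cascade magnetic field. The generalized Parisi formula of \cite{Pan05, SKvector} identifies $\lim_N F_N(t,\mu)$ with an infimum of a Parisi-type functional $\PP_t(\zeta)$ over Parisi measures $\zeta$ on $[0,1]$ refining the discretization of $\mu$. The plan is then to perform a change of variables $\zeta \leftrightarrow \nu \in \MM_b(\R_+)$, essentially by reading off the jump heights of $\zeta$ as the quantile function of $\nu$, and to reorganize $\PP_t(\zeta)$ via the Legendre duality $\xi(q) = \sup_a(qa - \xi^*(a))$ together with the identification $\psi(\nu) = F_1(0,\nu)$ from \eqref{e.psi.f1} into the Hopf--Lax form $\psi(\nu) + \tfrac{t}{2}\E\,\xi^*((X_\nu - X_\mu)/t)$. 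The delicate step---and where I expect the main technical work---is to show that the Parisi optimizer for the enriched model can be synchronized with the cascade tree of $\mu$, so that the realized joint law of $(X_\mu, X_\nu)$ is the comonotone coupling $X_\mu = \mu^{-1}(U), X_\nu = \nu^{-1}(U)$ through a common uniform $U$; this should follow from the ultrametric structure of the Ruelle cascade and the convexity of $\xi^*$ via Panchenko-type synchronization arguments in the spirit of \cite{PanPF, SKvector}, but adapting those ideas to the enriched setting with a joint $(\mu, \nu)$ structure is the crux of the matter.
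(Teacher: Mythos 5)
Your high-level strategy (Guerra interpolation for one bound, a Parisi-type formula plus a reorganization into Hopf--Lax form for the other) is indeed the paper's route, but as written the proposal has genuine gaps at the two places where the real work lies. First, for the lower bound you propose to ``apply the generalized Parisi formula of \cite{Pan05, SKvector} to the enriched model.'' No such off-the-shelf result covers the present model: the magnetic field $\sum_i \sigma_i z_i^\mu(\alpha)$ is indexed by the cascade and integrated against $\rpc(\alpha)$, and identifying the limit (Proposition~\ref{p.Parisi}/Theorem~\ref{ThParisiU}) requires redoing the Aizenman--Sims--Starr/cavity argument with a new ingredient: a synchronization, via perturbations with covariances $R_{1,2}^n q_{1,2}^m$ and \cite[Theorem~4]{multiSK}, forcing $R_{1,2}$ and $q_{1,2}=\mu^{-1}(\alpha^1\cdot\alpha^2)$ to become nondecreasing functions of their sum. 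It is this mechanism --- not ultrametricity plus convexity of $\xi^*$, as you suggest --- that produces the comonotone coupling $\zeta_\mu^{-1}(U)=\xi'(\zeta^{-1}(U))+\mu^{-1}(U)$ of \eqref{zdm}. Moreover, because the spins are soft, any such formula necessarily carries the self-overlap: it comes as $\sup_{u\in[d,D]}\inf_{\zeta,\lambda}$ with a Lagrange multiplier $\lambda$ for the constraint $N^{-1}|\sigma|^2\approx u$ (this is why the terms $-\tfrac12 Nt\,\xi(N^{-1}|\sigma|^2)-\tfrac12\mu^{-1}(1)|\sigma|^2$ sit in \eqref{FEth}). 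Your ``change of variables plus Legendre duality'' does not eliminate $u$ and $\lambda$; in the paper this is a substantive step using convexity of $\Psi(\nu,\lambda)$ in $(\nu,\lambda)$ via Auffinger--Chen \cite{AC15}, convexity of $\nu\mapsto\E\,\xi^*(X_\nu-X_\mu)$ via its Wasserstein representation, Sion's minimax theorem to interchange $\sup_u$ with $\inf_{\nu,\lambda}$, and the argument that the inner infimum is attained at $\lambda=0$ so that $\Psi(\nu,0)=\psi(\nu)$, followed by constraint-removal steps relying on the Lipschitz bound \eqref{eqLipc} (itself proved by a separate cascade interpolation) and on the monotonicity of $\psi$ together with $\xi^*\equiv 0$ on $\R_-$.

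Second, your Guerra upper bound cannot be run ``for each candidate $\nu\in\MM_b(\R_+)$.'' To build the interpolating Gaussian field on the cascade you need the covariance increment, essentially $(\nu^{-1}-\mu^{-1})(\alpha^1\cdot\alpha^2)$ together with its Onsager companion, to be a nonnegative nondecreasing function of the overlap; this restricts you to measures of the form $\nu=\zeta_\mu$ as in \eqref{zdm} (in particular $\nu^{-1}-\mu^{-1}$ must be nondecreasing), and for a general $\nu$ --- even one with $\nu(s)\le\mu(s)$ for all $s$ --- no such process exists on $\supp(\mfk R)$. So the interpolation yields the bound only over this reachable class, and you must still show that the infimum over it equals the infimum over all of $\MM_b(\R_+)$; that is exactly the truncation and monotonization work of Subsections~3.2 and~3.4 (using \eqref{eqLipc}, the monotonicity of $\nu\mapsto\Psi(\nu,\lambda)$ through the PDE \eqref{ParisiPDE}, and $\xi^*(s)=0$ for $s\le 0$), which your sketch omits. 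Two smaller points: since $\xi$ is only assumed convex on $\R_+$ and $\xi^*$ is a supremum over $r\ge 0$, the pointwise Fenchel--Young sign argument needs asymptotic positivity of $R_{1,2}$, enforced by a Ghirlanda--Guerra perturbation; and the $\E\,\xi^*$ term does not appear as a field with covariance $\xi^*(\cdot)$ in the derivative, but as the free energy of the Onsager field $y(\alpha)$ with covariance $\theta(\zeta^{-1}(\alpha^1\cdot\alpha^2))$, which equals $\tfrac12\int_0^u\zeta\,\d\theta=\tfrac12\E\,\xi^*(X_{\zeta_\mu}-X_\mu)$ only after the identity $\theta(s)=\xi^*(\xi'(s))$ and an integration by parts.
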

The motivation in \cite{Mourrat19} for this statement is that the right side of (\ref{e.conj.h}), seen as a function of $(t,\mu)$, solves the formal Hamilton-Jacobi equation
\begin{equation}  
\label{e.hj}
\Ll\{
\begin{aligned}
& 2\dr_t f + \int\! \xi(-2\dr_\mu f) \, \d \mu = 0 & \quad \text{on } \R_+ \times \mcl M(\R_+), 
\\
& f(0,\,\cdot\,) = \psi  &\quad \text{on } \mcl M(\R_+).
\end{aligned}
\Rr.
\end{equation}
For discrete $\mu$ as in (\ref{e.music}), one can check that 
\begin{equation}
\label{e.error.term}
2 \dr_t F_N + \int \xi(-2\dr_\mu F_N) \, \d \mu = \e\la \xi(R_{1,2}) \ra_t
-\sum_{\ell=0}^{k} p_\ell\, \xi\Bigl(p_\ell^{-1}
\e \la R_{1,2}\1_{\{\alpha^1\wedge\alpha^2=\ell\}} \ra_t
\Bigr),
\end{equation}
where $R_{1,2} := N^{-1}\si^1 \cdot \si^2$ is the overlap of $\si^1$ and $\si^2$,
\begin{equation*} 
p_\ell:=\mu(\{q_\ell\})=\zeta_{\ell+1}-\zeta_\ell = \E \la \1_{\{\alpha^1\wedge\alpha^2=\ell\}} \ra
\end{equation*}
(see e.g.\ \cite[Lemma~2.3]{Mourrat19}), and $\la\,\cdot\,\ra$ denotes the average with respect to the Gibbs measure 
\begin{multline*}
\d G_N(\sigma,\alpha) \sim \exp \Big( \sqrt{t} H_N(\sigma) + \sum_{i=1}^{N}\sigma_i z_i^\mu(\alpha)
\\
- \frac{1}{2}Nt\xi(N^{-1} |\si|^2) - \frac{1}{2}\mu^{-1}(1)|\si|^2 \Big)  \,v_\alpha\, \d P_N(\sigma).
\end{multline*}
When $\xi$ is the square function, the right side of \eqref{e.error.term} can be interpreted as the conditional variance of the $\si$-overlap~$R_{1,2}$ given the $\al$-overlap $\alpha^1\cdot\alpha^2$. More generally, the right side of~\eqref{e.error.term} is small if and only if the conditional distribution of the overlap~$R_{1,2}$ given $\alpha^1\cdot\alpha^2$ is concentrated. This evokes the synchronization phenomenon used in the proof of the Parisi formula  by Talagrand in \cite{TPF}  along Guerra's interpolation~\cite{Guerra} with nearly optimal parameters; see also \cite{SG}. The idea of using the Hamilton-Jacobi techniques to study replica symmetric solution of the SK model was already utilized in \cite{GuerraSum}, and  one-step replica symmetry breaking analogues of the equation (\ref{e.error.term}) were derived and studied in various models in \cite{Barra, Agliari}.

\smallskip 

The main step in the proof of Theorem~\ref{t.main}, which is to pass to the limit $N \to \infty$ for the left side of \eqref{e.conj.h} and get some expression for the limit, is almost identical to the argument in \cite{SKvector} (specialized to the one-dimensional case), so we only outline the necessary modifications. The main tool is the synchronization mechanism developed in \cite{multiSK, SKpotts, SKvector} based on the overlap ultrametricity proved in~\cite{PUltra} for measures that satisfy the Ghirlanda-Guerra identities (and the fact that one has a lot of flexibility in enforcing these identities by way of small perturbations).  The synchronization has been applied in a variety of situations, e.g. \cite{JSK17, ConMin, Ko18}, and here we demonstrate another application. A particular synchronization that will be needed here is the one that forces the overlaps $\mu^{-1}(\alpha^1\cdot\alpha^2)$ and $R_{1,2}=N^{-1}\sigma^1\cdot \sigma^2$ to be deterministic functions of their sum in the thermodynamic limit. Notice that we need to use a synchronization argument here even in the case of Ising spins.

\smallskip

The reader may rightfully wonder what to make of the term $\xi(N^{-1} |\si|^2)$ appearing in the exponential in \eqref{FEth}, which was introduced for convenience but is otherwise a nuisance (except in the case of Ising spins, where it is deterministic and therefore causes no harm). The second goal of this paper is to explain how to remove this term and deduce from Theorem~\ref{t.main} the limit of the ``untampered'' free energy in \eqref{e.lim.free.energy}.
At present this is perhaps not as interseting as it sounds, since the proof of Theorem~\ref{t.main} could be modified to obtain the limit of the quantity without the term $\xi(N^{-1} |\si|^2)$ directly. However, it is likely that a more direct proof of Theorem~\ref{t.main} exists, in which case it is important to notice that Theorem~\ref{t.main} is indeed all the information needed to conclude. Moreover, we obtain in this way a somewhat different expression for the limit in \eqref{e.lim.free.energy} than that obtained in \cite{Pan05, SKvector}. 

\smallskip

In order to state this second result, we introduce two more parameters to the energy and write, for every $s,t \ge 0$, $\mu \in \MM_b(\R_+)$ and $h \in \R$,
\begin{multline}  
  \FF_N(s,t,\mu,h) :=  \frac 1 N \E \log \iint  \!\exp \Big( \sqrt{t} H_N(\sigma) + \sum_{i=1}^{N}\sigma_i z_i^\mu(\alpha)
\\
- \frac{1}{2}N(t-s)\xi(N^{-1} |\si|^2) - \frac{1}{2}\mu^{-1}(1)|\si|^2 + h |\si|^2\Big)  \, \d P_N(\sigma) \rpc(\alpha).
\end{multline}
Notice that when $s = 0$, this quantity is of the form covered by Theorem~\ref{t.main}, up to a redefinition of $P_N$ to absorb the term $\exp(h|\si|^2)$. We denote 
\begin{equation}  
\label{e.Psi.f1}
\Psi(\mu,h) := \FF_1(0,0,\mu,h) = \FF_N(0,0,\mu,h).
\end{equation}
 \begin{theorem}\label{t.mains}
For every $s,t \ge 0$, $\mu \in \mcl \MM_b(\R_+)$ and $h \in \R$, we have
\begin{equation*}  
\lim_{N\to \infty}   \FF_N(s,t,\mu,h) = \sup_{h'\in \R} \inf_{\nu \in \MM_b(\R_+)} \Ll( \Psi(\nu,h') +  \frac{t}{2} \E \xi^* \Ll( \frac{X_\nu - X_\mu}{t} \Rr) - \frac{s}{2}\xi^* \Ll( \frac{2(h'-h)}{s} \Rr)  \Rr) .
\end{equation*}
\end{theorem}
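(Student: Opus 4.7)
The strategy reduces the theorem to Theorem~\ref{t.main} by exploiting an exact Legendre duality between the parameter $s$ and the magnetic field $h$. I would first handle the base case $s=0$: factoring $e^{h|\sigma|^2} = \prod_{i=1}^N e^{h\sigma_i^2}$ and absorbing this into the reference measure via the tilt $\tilde{P}_1(\d \sigma) := Z_h^{-1}\,e^{h\sigma^2}\,\d P_1(\sigma)$, with $Z_h := \int e^{h\sigma^2}\,\d P_1$, converts $\FF_N(0,t,\mu,h)$ to $\log Z_h$ plus the free energy \eqref{FEth} attached to the bounded-support product $\tilde P_N$. Applying Theorem~\ref{t.main} to this tilted model, and recognizing that $\log Z_h + \tilde\psi(\nu) = \FF_1(0,0,\nu,h) = \Psi(\nu,h)$ by \eqref{e.Psi.f1}, gives
\begin{equation*}
\lim_{N\to\infty}\FF_N(0,t,\mu,h') = \inf_{\nu \in \MM_b(\R_+)}\Ll\{\Psi(\nu,h') + \tfrac{t}{2}\E\xi^*\Ll(\tfrac{X_\nu-X_\mu}{t}\Rr)\Rr\} =: g(h').
\end{equation*}
The function $g$ is non-decreasing in $h'$ (since $|\sigma|^2 \ge 0$) and $D$-Lipschitz (since $N^{-1}|\sigma|^2 \le D$).

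The heart of the argument is the Legendre duality
\begin{equation*}
\tfrac{s}{2}\xi(r) = \sup_{c \ge 0}\Ll\{cr - \tfrac{s}{2}\xi^*(2c/s)\Rr\} \qquad\text{for all } r \in [0,D],\ s > 0,
\end{equation*}
which holds because $\xi$ is convex and non-decreasing on $\R_+$ with $\xi(0)=0$, so the extension setting $\xi \equiv +\infty$ on $\R_-$ is closed convex and satisfies $\xi^{**} = \xi$ on $\R_+$. For the lower bound, applying the pointwise inequality $\tfrac{Ns}{2}\xi(N^{-1}|\sigma|^2) \ge c|\sigma|^2 - \tfrac{Ns}{2}\xi^*(2c/s)$ inside the exponential defining $\FF_N(s,t,\mu,h)$ yields, for every $c \in \R$,
\begin{equation*}
\FF_N(s,t,\mu,h) \ge \FF_N(0,t,\mu,h+c) - \tfrac{s}{2}\xi^*(2c/s).
\end{equation*}
Passing to the liminf, setting $h' := h + c$, and taking the supremum over $h' \in \R$ produces the lower bound matching the right-hand side of the theorem.

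The matching upper bound is obtained by discretization of the self-overlap range $R := N^{-1}|\sigma|^2 \in [0,D]$. For $K \in \N$, partition $[0,D]$ into slices $I_k = [r_{k-1},r_k]$ with $r_k := kD/K$, and let $c_k := \tfrac{s}{2}\xi'(r_k)$; the duality saturates at $(r_k,c_k)$, so uniform continuity of $\xi$ and $\xi'$ on $[0,D]$ gives $\tfrac{s}{2}\xi(R) \le c_k R - \tfrac{s}{2}\xi^*(2c_k/s) + \epsilon_K$ on $\{R \in I_k\}$, with $\epsilon_K = O(K^{-1})$. Summing over the $K$ slices inside the partition function and taking $N^{-1}\E\log$ yields
\begin{equation*}
\FF_N(s,t,\mu,h) \le \max_{1 \le k \le K}\Ll\{\FF_N(0,t,\mu,h+c_k) - \tfrac{s}{2}\xi^*(2c_k/s)\Rr\} + \epsilon_K + o_N(1),
\end{equation*}
where the $o_N(1)$ absorbs both $N^{-1}\log K$ and the $O(\sqrt{\log K / N})$ cost of commuting $\max$ with $\E\log$ via Gaussian concentration of $\log Z_N$. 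Letting $N \to \infty$ then $K \to \infty$, and using continuity of $c \mapsto g(h+c) - \tfrac{s}{2}\xi^*(2c/s)$, closes the bound. The only non-routine ingredient is the concentration estimate used to exchange $\E\log\max$ with $\max \E\log$; this follows from standard Gaussian concentration for $H_N$ and $z^\mu$ together with an Efron–Stein estimate for the $P_N$-randomness, and is the mildest of technicalities, presenting no real obstacle beyond Theorem~\ref{t.main} itself.
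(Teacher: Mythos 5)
Your route is genuinely different from the paper's. The paper never decomposes the partition function: it shows that $\FF_N$ satisfies the Hamilton--Jacobi equation $2\dr_s \FF_N - \xi(\dr_h \FF_N)\simeq 0$ up to an error controlled by $(N^{-1}\dr_h^2\FF_N)^{1/2}$, identifies the candidate limit $f$ with the Hopf--Lax formula whose initial datum is $\lim_N \FF_N(0,\cdot)$ (Theorem~\ref{t.main}, after the same tilting observation you make), and then runs an $L^1$-contraction/comparison argument on a shrinking cone in the $(s,h)$ variables, using only convexity and monotonicity of $\FF_N$ and $f$ in $h$; in particular it only ever manipulates $\E\log$ quantities and needs no concentration of the free energy. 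Your argument replaces the PDE comparison by a direct Legendre/Laplace-principle computation: the Fenchel--Young lower bound $\FF_N(s,t,\mu,h)\ge \FF_N(0,t,\mu,h+c)-\frac s2\xi^*(2c/s)$ is correct and clean, and the upper bound by slicing the self-overlap $N^{-1}|\si|^2$ and saturating the duality at the grid points is structurally sound (the $O(1/K)$ error estimate with $c_k=\frac s2\xi'(r_k)$ is fine because $\xi^*(\xi'(r))=\theta(r)$). What your approach buys is a more elementary, purely variational proof of the Hopf--Lax representation in $(s,h)$; what it costs is the one probabilistic input the paper's method avoids, discussed next.

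The step where you commute the maximum over slices with $\E\log$ is not the ``mildest of technicalities'' as justified, and the justification you give is partly wrong: there is no ``$P_N$-randomness'' ($P_N$ is a fixed, deterministic reference measure), so Efron--Stein for it is vacuous. The randomness you must control beyond the Gaussian fields $H_N$ and $z^\mu$ is the Ruelle probability cascade $\mathfrak R$ itself (the random weights), and concentration of the cascade-enriched free energy around its $\E\log$ is exactly what your argument needs but does not address. It is true and repairable: first reduce to discrete $\mu$ using the Lipschitz bound \eqref{FElip} (which extends to $\FF_N$), so the cascade has finitely many levels; conditionally on $\mathfrak R$ the free energy is a Lipschitz function of the Gaussian fields with variance bounded by $CN$, giving deviations $O(N^{-1/2})$ by standard Gaussian concentration; and the fluctuations of $\E[\log Z_N\mid \mathfrak R]$ in the cascade variables are $O(1)$ (not $O(N)$), which follows from the stability/invariance properties of the underlying Poisson processes applied recursively level by level, so they vanish after division by $N$. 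Since $K$ is fixed before $N\to\infty$, an $L^1$ bound $\E|\frac1N\log Z_N-\frac1N\E\log Z_N|\to 0$ for each slice suffices to pull the maximum out. With that lemma supplied, your proof closes; without it, the upper bound is incomplete.
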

The intuition for this result is simple, and consists in writing the Hopf-Lax formula for the equation
\begin{equation}  
\label{e.approx.hj}
2\dr_s \FF_N - \xi(\dr_h \FF_N) \simeq 0.
\end{equation}
By setting $s = t = 1$, $\mu=\delta_0$, and $h=0$ in Theorem~\ref{t.mains}, we thus get the following new representation for the free energy of models with soft spins.
\begin{corollary} The limit free energy can be written as
\begin{multline}  
\lim_{N\to\infty} \frac 1 N \E \log \int  \!\exp \Ll(H_N(\sigma)\Rr)  \, \d P_N(\sigma)
\\
= \sup_{h\in \R} \inf_{\nu \in \MM_b(\R_+)} \Bigl( \Psi(\nu,h) +  \frac{1}{2}\int_{\R_+} \xi^*(r) \, \d \nu(r) - \frac{1}{2} \xi^*(2h) \Bigr) .
\end{multline}
\end{corollary}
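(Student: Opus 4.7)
The plan is to derive the corollary as a direct specialization of Theorem~\ref{t.mains}, taking $s=t=1$, $\mu=\delta_0$, and $h=0$. First I verify that the left side matches the untampered free energy. With $\mu=\delta_0$, the generalized inverse satisfies $\mu^{-1}(r)=0$ for every $r\in[0,1]$, so the auxiliary Gaussian process $z^\mu(\al)$ has vanishing covariance and is therefore almost surely zero, and the term $-\tfrac{1}{2}\mu^{-1}(1)|\si|^2$ disappears. Since $s=t$, the correction $-\tfrac{1}{2}N(t-s)\xi(N^{-1}|\si|^2)$ also vanishes, and since $h=0$ the exponential weight $e^{h|\si|^2}$ is $1$. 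Thus $\FF_N(1,1,\delta_0,0)$ reduces to $\tfrac{1}{N}\E\log\int\exp(H_N(\sigma))\,\d P_N(\sigma)$, which is exactly the quantity whose limit is sought.

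Next I rewrite the right side of Theorem~\ref{t.mains} under the same specialization. Since $X_{\delta_0}=\delta_0^{-1}(U)=0$, one has
\begin{equation*}
\frac{t}{2}\E\xi^*\!\Ll(\frac{X_\nu-X_\mu}{t}\Rr) = \frac12\E\xi^*(X_\nu),
\end{equation*}
and by the change of variables $r=\nu^{-1}(u)$ (using that $U$ is uniform on $[0,1]$),
\begin{equation*}
\E\xi^*(X_\nu) = \int_0^1 \xi^*(\nu^{-1}(u))\,\d u = \int_{\R_+}\xi^*(r)\,\d\nu(r).
\end{equation*}
Likewise $\tfrac{s}{2}\xi^*\!\bigl(\tfrac{2(h'-h)}{s}\bigr)$ becomes $\tfrac12\xi^*(2h')$. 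After relabeling $h'\to h$, the right side of Theorem~\ref{t.mains} becomes precisely the expression claimed in the corollary.

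The last step is simply to invoke Theorem~\ref{t.mains} to equate the two sides. There is no real obstacle here: the argument is a matter of checking that all three simplifications (vanishing of $z^{\delta_0}$, vanishing of $(t-s)\xi$, and the identification of $\E\xi^*(X_\nu)$ with $\int\xi^*\,\d\nu$) are correctly carried out. The one point that merits a brief comment is the measurability/definition of $z^{\delta_0}$: the construction given after~\eqref{e.def.mu-1} still applies, and indeed yields the zero process, so no regularity issue arises in the $\mu=\delta_0$ limit.
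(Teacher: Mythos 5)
Your proposal is correct and follows exactly the route the paper takes: the corollary is obtained by setting $s=t=1$, $\mu=\delta_0$, $h=0$ in Theorem~\ref{t.mains}, and your verifications (that $z^{\delta_0}\equiv 0$ since $\delta_0^{-1}\equiv 0$, that the $(t-s)\xi$ and $h|\si|^2$ terms drop out, and that $\E\,\xi^*(X_\nu)=\int_{\R_+}\xi^*(r)\,\d\nu(r)$ by the quantile transform) are precisely the checks the paper leaves implicit.
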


\subsection*{Organization of the paper.}
In order to prove Theorem~\ref{t.main}, we first state a different expression for the left side of \eqref{e.conj.h} in Proposition \ref{p.Parisi} below. We then rewrite it in the form of the right side of \eqref{e.conj.h} in Section \ref{SecHLrepr}, by reasoning similarly to what was done in \cite{Mourrat19} in the case $\mu = \de_0$. We next turn to the proof of Proposition~\ref{p.Parisi} in Section~\ref{s.parisi}.  Finally, we provide the proof of Theorem~\ref{t.mains} in Section~\ref{s.mains}.

\section{Parisi formula}
In this section, we present the structure of the argument for identifying the limit on the left side of~\eqref{e.conj.h} in the more ``classical'' form in which Parisi formulas are usually stated. 
As a preparation for stating the formula we will obtain, we provide with an alternative description of the quantities $\psi$ and~$\Psi$ appearing in the main statements of Section~\ref{s.intro}. Given a probability measure $\nu$ on~$\R_+$, we write $\nu(s):=\nu([0,s])$. For every $\nu\in \MM_b(\Reals_+)$ and $\lambda \in \R$, we denote by $\Pnl = \Pnl(t,x)$ the solution of the equation
\begin{equation}
\Ll\{
\begin{aligned}
& \dr_t \Phi_{\nu,\lambda} = - \frac{1}{2}\bigl(\dr_x^2 \Pnl + \nu(t)(\dr_x \Pnl)^2\bigr)
&  \mbox{on } [0,\nu^{-1}(1)]\times \Reals,
\\
& \Pnl(\nu^{-1}(1),x)=\log\int_{\R} \exp\Ll(\sigma x + \lambda \sigma^2\Rr) \, \d P_1(\sigma) &  \text{for } x \in \R,
\end{aligned}
\Rr.
\label{ParisiPDE}
\end{equation}
and we set
\begin{equation}
\label{e.def.PP}
\PP(\nu,\lambda):=\Pnl(0,0).
\end{equation}
Using classical properties of Ruelle probability cascades, one can verify that the functions $\psi$ and $\Psi$ defined in \eqref{e.psi.f1} and \eqref{e.Psi.f1} respectively satisfy, for every $\mu \in \MM_b(\R_+)$ and $h \in \R$, 
\begin{equation}
\label{e.def.psi}
\Psi(\mu,h) = \PP \Ll( \mu, h - \frac 1 2 \mu^{-1}(1) \Rr) \quad \text{ and } \quad \psi(\mu) = \Psi(\mu,0).
\end{equation}
Given a probability measure $\zeta$ on $\Reals_+$, let $\zdm$ denote the probability measure on $\Reals_+$ whose cumulative distribution function satisfies
\begin{equation}
\zdm^{-1}(x):= \xi'(\zeta^{-1}(x))+\mu^{-1}(x).
\label{zdm}
\end{equation}
In other words, the c.d.f.\ of $\zdm$ is 
\begin{equation}
\zdm:= (\xi'\circ\zeta^{-1}+\mu^{-1})^{-1}.
\label{zdm2}
\end{equation}
Finally, let $\MM_{0,u}=\MM([0,u])$ denote the space of probability measures on $[0,u]$. 
\smallskip

Notice that it suffices to prove Theorem~\ref{t.main} for $t = 1$. Indeed, once the result is known in this case, we recover the general statement by replacing $\xi$ with $t \xi$. The main step towards the proof of Theorem~\ref{t.main} is the following result. For $\sigma_1$ is distributed according to $P_1$, we denote by $d$ and $D$ the smallest and largest points of the support of the distribution of $\sigma_1^2$. We also write, for every $r \in \R$,
\begin{equation*}  
\label{e.def.theta}
\theta(r):=r\xi'(r)-\xi(r).
\end{equation*}
\begin{proposition}
\label{p.Parisi}
For every $\mu \in \MM_b(\R_+)$, we have
\begin{multline}\label{Parisifix}
\lim_{N\to\infty} F_N(1,\mu) =
\sup_{u\in [d,D]}\inf_{\substack{\zeta\in \MM_{0,u} \\ \lambda\in\Reals}}\Bigl[
-\lambda u +
\PP\Bigl(\zdm,\lambda + \frac{\xi'(u)-\xi'(\zeta^{-1}(1))}{2}\Bigr)
\\
-\frac{1}{2}\int\limits_0^u\! \zeta(s)\, \d \theta(s)
- \frac{1}{2}\xi(u) - \frac{1}{2}\mu^{-1}(1)u 
\Bigr].
\end{multline}
\end{proposition}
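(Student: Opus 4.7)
The plan is to adapt the argument from \cite{SKvector} (specialized to scalar spins) to handle the Gaussian field $z^\mu(\alpha)$ coming from the Ruelle cascade. By the Lipschitz bound \eqref{FElip} and a parallel approximation in the test measure $\zeta$, one may assume throughout that both $\mu$ and $\zeta$ are of the discrete form \eqref{e.music}. The strategy is the classical pair of matching bounds: the upper bound by Guerra's RSB interpolation and the lower bound by the Aizenman--Sims--Starr cavity scheme, with a synchronization argument used to reconcile them.

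For the upper bound, fixing $u \in [d,D]$ and a pair $(\zeta,\lambda) \in \MM_{0,u} \times \R$ of discrete form, I would interpolate between the enriched Hamiltonian and a decoupled reference Hamiltonian driven by the Ruelle cascade with inverse c.d.f.\ $\xi' \circ \zeta^{-1} + \mu^{-1} = \zdm^{-1}$, while simultaneously inserting a Lagrange-type term $\lambda(|\sigma|^2 - Nu)$ to fix $N^{-1}|\sigma|^2$ near $u$. Gaussian integration by parts identifies the derivative of the interpolating free energy with a combination of $\E \la \theta(R_{1,2}) \ra$ and a Stieltjes integral in $\zeta$; convexity of $\theta$ on $\R_+$ gives this derivative a definite sign that, together with the endpoint identification with $\PP(\zdm, \lambda + \tfrac12(\xi'(u) - \xi'(\zeta^{-1}(1))))$, yields the right side of \eqref{Parisifix} as an upper bound for $\limsup_N F_N(1,\mu)$ after taking $\inf_{\zeta,\lambda}$ and $\sup_u$.

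For the lower bound, I would first add a small generic perturbation to the Hamiltonian with vanishing contribution to the free energy, so that along a subsequence the limiting Gibbs measure satisfies the Ghirlanda--Guerra identities for the joint overlap array $(R_{\ell,\ell'}, \alpha^\ell \cdot \alpha^{\ell'})$. By \cite{PUltra} the array is then ultrametric, and the synchronization theorem of \cite{multiSK, SKvector} applied to the pair $(R_{1,2}, \mu^{-1}(\alpha^1 \cdot \alpha^2))$ produces a nondecreasing left-continuous function $\zeta^{-1}$ with $R_{1,2} = \zeta^{-1}(\alpha^1 \cdot \alpha^2)$ almost surely in the thermodynamic limit. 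The Aizenman--Sims--Starr cavity decomposition, applied to the increment $\E \log Z_N - \E \log Z_{N-1}$, then splits into an onsite contribution involving a Gaussian field on $\supp(\mfk R)$ with covariance $\xi'(\zeta^{-1}(\alpha^1 \cdot \alpha^2)) + \mu^{-1}(\alpha^1 \cdot \alpha^2) = \zdm^{-1}(\alpha^1 \cdot \alpha^2)$, producing the $\PP(\zdm, \lambda + \tfrac12(\xi'(u) - \xi'(\zeta^{-1}(1))))$ term with $u$ the asymptotic value of $N^{-1}|\sigma|^2$, plus a diagonal correction that collapses to $-\tfrac12 \int_0^u \zeta(s) \, \d \theta(s) - \tfrac12 \xi(u) - \tfrac12 \mu^{-1}(1) u$.

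The main obstacle is the synchronization step: one must verify that the Ghirlanda--Guerra perturbation remains effective in the presence of the $z^\mu$-driven external field, so that the joint overlap array is ultrametric and the pair $(R_{1,2}, \mu^{-1}(\alpha^1 \cdot \alpha^2))$ is genuinely comonotone in the limit. Notice that this step is required even for Ising spins, in contrast to the classical Parisi setting where the analogous synchronization statement is automatic. Once synchronization is in hand, matching the two bounds reduces to the minimax duality in $(\lambda, u)$ along the lines of \cite{SKvector}.
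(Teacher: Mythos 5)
Your proposal is correct and follows essentially the same route as the paper: constrain the self-overlap $N^{-1}|\sigma|^2$ to a neighborhood of $u$, obtain the upper bound by Guerra's RSB interpolation with a cascade reference field of covariance $\zdm^{-1}(\alpha^1\cdot\alpha^2)$ together with a Lagrange term in $\lambda$, and obtain the lower bound by the Aizenman--Sims--Starr scheme combined with synchronization of the pair $(R_{1,2},\mu^{-1}(\alpha^1\cdot\alpha^2))$ enforced through the Ghirlanda--Guerra identities, with a continuity extension in $\zeta$ accounting for the shift $\frac{1}{2}(\xi'(u)-\xi'(\zeta^{-1}(1)))$. One minor correction: the definite sign of the interpolation derivative comes from convexity of $\xi$ (namely $\xi(a)-a\xi'(b)+\theta(b)\ge 0$), supplemented by asymptotic positivity of $R_{1,2}$ via a small perturbation when $\xi$ is convex only on $\R_+$, rather than from convexity of $\theta$.
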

We now outline the structure of the argument for obtaining Proposition~\ref{p.Parisi}. 
By the definitions of $d$ and $D$, when $\sigma\sim P_N=P_1^{\otimes N}$, we have that $N^{-1}|\sigma|^2\in [d,D]$, and any point $u\in [d,D]$ can be approximated by some $N^{-1}|\sigma|^2$ for large $N$ and $\sigma\in \supp P_N$. For every $u \in [d,D]$ and $\ep > 0$,  let
\begin{equation}
\Omega_N^\eps(u)=\Bigl\{\sigma\in \R^N \,:\, N^{-1}|\sigma|^2\in (u-\eps,u+\eps)\Bigr\},
\label{eqOmega}
\end{equation}
and consider
\begin{multline*}  
  F_N^\eps(u) := \frac 1 N \E \log \int\!\!\!\!\! \int\limits_{\Omega^\eps_N(u)} \exp \Big( H_N(\sigma) + \sum_{i=1}^{N}\sigma_i z_i^\mu(\alpha)
\\
- \frac{1}{2}N\xi(N^{-1} |\si|^2) - \frac{1}{2}\mu^{-1}(1)|\si|^2  \Big)\, \d P_N(\sigma)\, \rpc(\alpha).
\end{multline*}
The measure $\mu$ will be fixed throughout, so we keep the dependency of~$F_N^\ep(u)$ on $\mu$ implicit in the notation. 
It is clear that, denoting
\begin{equation}
p_N^\eps(u) := \frac 1 N \E \log \int\!\!\!\!\! \int\limits_{\Omega^\eps_N(u)}  \exp \Big( H_N(\sigma) + \sum_{i=1}^{N}\sigma_iz_i^\mu(\alpha) \Big)\, \d P_N(\sigma)\, \rpc(\alpha),
\label{pNeps}
\end{equation}
we have that as $\ep > 0$ tends to zero,
\begin{equation}  
F_N^\eps(u) :=  p_N^\eps(u)- \frac{1}{2}\xi(u) - \frac{1}{2}\mu^{-1}(1)u + \bigO{\eps}.
\label{Oeps}
\end{equation}
Proposition~\ref{p.Parisi} is a direct consequence of the following result.
\begin{theorem}\label{ThParisiU}
For every $u\in[d,D]$,
\begin{equation*}
\lim_{\eps\downarrow 0}\lim_{N\to\infty} p_N^\eps(u) =
\inf_{\substack{\zeta\in \MM_{0,u} \\ \lambda\in\Reals}}\Bigl[
-\lambda u +
\PP\Bigl(\zdm,\lambda + \frac{\xi'(u)-\xi'(\zeta^{-1}(1))}{2}\Bigr)
-\frac{1}{2}\int\limits_0^u\! \zeta(s)\, \d \theta(s)
\Bigr].
\end{equation*}
\end{theorem}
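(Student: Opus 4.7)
\textbf{Plan for Theorem~\ref{ThParisiU}.}

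\emph{Overall strategy.} The plan is to reduce the sphere-constrained quantity $p_N^\eps(u)$ to a standard Parisi-type variational problem by introducing a Lagrange multiplier $\lambda \in \R$ to relax the constraint $N^{-1}|\sigma|^2 \approx u$, and then to establish matching upper and lower bounds via Guerra's interpolation and the Aizenman-Sims-Starr cavity scheme respectively. The central algebraic observation is that the inverse c.d.f.\ of the measure $\zdm$ defined in~\eqref{zdm} is the sum $\xi'\circ\zeta^{-1} + \mu^{-1}$, which is precisely the covariance kernel obtained by superposing two independent cascade-indexed Gaussian processes: one of covariance $\xi'\circ\zeta^{-1}$ produced by Guerra's linearization of $H_N$, and the process $z^\mu(\alpha)$ already present in $p_N^\eps(u)$. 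This is why the Parisi functional $\PP(\zdm,\,\cdot\,)$ appears, and the shift $\tfrac{1}{2}(\xi'(u)-\xi'(\zeta^{-1}(1)))$ in its second argument accounts for the mismatch between the self-overlap $u$ of $\sigma$ and the endpoint $\zeta^{-1}(1)$ used in the terminal condition of~\eqref{ParisiPDE}.

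\emph{Upper bound.} A Chebyshev-type exponential tilt by $e^{\lambda(|\sigma|^2-Nu)}$, using that $|\sigma|^2/N \in (u-\eps,u+\eps)$ on $\Omega_N^\eps(u)$, gives, for every $\lambda \in \R$,
\begin{equation*}
p_N^\eps(u) \leq -\lambda u + \tfrac{1}{N}\E \log \iint \exp\Bigl(H_N(\sigma)+\sum_{i=1}^{N}\sigma_i z_i^\mu(\alpha) + \lambda|\sigma|^2\Bigr)\, \d P_N(\sigma) \rpc(\alpha) + \bigO{\eps}.
\end{equation*}
For each $\zeta \in \MM_{0,u}$, I would then interpolate $\sqrt{t}\,H_N(\sigma)$ with $\sqrt{1-t}\,\sum_{i=1}^{N} \sigma_i y_i(\alpha)$, where $y_i(\alpha)$ is independent of $z_i^\mu(\alpha)$ with covariance $\xi'(\zeta^{-1}(\alpha^1\cdot\alpha^2))$. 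Gaussian integration by parts in $t$, combined with the replacement $R_{1,1}\approx u$ valid on $\Omega_N^\eps(u)$ and Guerra's convexity bound applied to $\xi$, yields the usual remainder $-\tfrac{1}{2}\int_0^u \zeta(s)\,\d\theta(s)$ up to an $\bigO{\eps}$ error. At $t=0$ the combined field $y_i+z_i^\mu$ has covariance $\zdm^{-1}(\alpha^1\cdot\alpha^2)$, so the $(\sigma,\alpha)$-integral reduces to $\PP\bigl(\zdm, \lambda + \tfrac{\xi'(u)-\xi'(\zeta^{-1}(1))}{2}\bigr)$ by the standard identification of Ruelle-cascade free energies with the Parisi PDE~\eqref{ParisiPDE}. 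Passing $N\to\infty$, then $\eps\downarrow 0$, and optimizing over $(\zeta,\lambda)$ delivers the upper bound in Theorem~\ref{ThParisiU}.

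\emph{Lower bound.} For the matching lower bound, I would follow the Aizenman-Sims-Starr cavity scheme as in~\cite{SKvector}, adding a small mixed $p$-spin perturbation to the Hamiltonian so that any limiting asymptotic Gibbs measure satisfies the Ghirlanda-Guerra identities. The ultrametricity theorem of~\cite{PUltra} together with the synchronization mechanism of~\cite{multiSK, SKpotts, SKvector} then force the overlaps $R_{1,2}$ and $\mu^{-1}(\alpha^1\cdot\alpha^2)$ to be monotone deterministic functions of their sum in the thermodynamic limit. Thus the entire limiting overlap structure on $\Omega_N^\eps(u)$ is parametrized by a single measure $\zeta \in \MM_{0,u}$, and unrolling the cavity computation (identifying the conjugate multiplier to the self-overlap constraint with $\lambda$) yields the same expression as the upper bound, term by term.

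\emph{Main obstacle.} The principal technical difficulty is carrying out the synchronization argument in the presence of the hard spherical constraint $\sigma \in \Omega_N^\eps(u)$: the Ghirlanda-Guerra perturbation must be strong enough to force the identities asymptotically but must not shift the self-overlap distribution in a way that destroys the $\eps$-sphere restriction, and one must verify that the cavity computation remains compatible with the constraint after the Gaussian fields are added. Once this synchronization is in place, the parametrization by $(\zeta,\lambda) \in \MM_{0,u}\times\R$ exhausts all possible limiting overlap laws constrained to self-overlap $u$, and the matching of the two bounds is a direct verification.
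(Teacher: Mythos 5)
Your overall architecture matches the paper's: Guerra's replica-symmetry-breaking interpolation plus a Lagrange multiplier for the spherical constraint in the upper bound, and the Aizenman--Sims--Starr scheme with synchronization of $R_{1,2}$ and $\mu^{-1}(\alpha^1\cdot\alpha^2)$ (via Ghirlanda--Guerra perturbations and \cite{multiSK}) in the lower bound; your lower-bound outline agrees with the paper's essentially verbatim. The genuine problem is the order of operations in your upper bound. You first apply the tilt $e^{\lambda(|\sigma|^2-Nu)}$ to discard the constraint $\sigma\in\Omega_N^\eps(u)$, and only afterwards run Guerra's interpolation on the resulting unconstrained quantity, while still invoking ``$R_{1,1}\approx u$ valid on $\Omega_N^\eps(u)$''. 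Once the constraint is gone, $R_{1,1}=N^{-1}|\sigma|^2$ ranges over all of $[d,D]$, and the diagonal (self-overlap) term produced by the Gaussian integration by parts, of the form $\tfrac12\E\la \xi(R_{1,1})-R_{1,1}\xi'(\zeta^{-1}(1))+\theta(\zeta^{-1}(1))\ra$, enters $\varphi'(t)$ with a plus sign and is nonnegative by convexity, vanishing only when $R_{1,1}=\zeta^{-1}(1)$; it is therefore not $\bigO{\eps}$ and pushes the estimate in the wrong direction. Unlike $F_N(t,\mu)$, the quantity $p_N^\eps(u)$ in \eqref{pNeps} carries no compensating $-\tfrac12 N\xi(N^{-1}|\sigma|^2)$ term, so nothing absorbs these contributions. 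The paper avoids this precisely by keeping the constraint throughout the interpolation: it interpolates on $\Omega_N^\eps(u)$ with discrete $\zeta$ satisfying $\zeta^{-1}(1)=u$ (so the diagonal term is $\bigO{\eps}$), includes the compensator $\sqrt{t}\sqrt{N}\,y(\alpha)$ with covariance $\theta\circ\zeta^{-1}$, and applies the tilt $-N\lambda u+\lambda|\sigma|^2$ only to $\varphi(0)$, where the Hamiltonian is the pure cascade field and removing the constraint costs only $N|\lambda|\eps$.

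Two smaller points in the same step. First, with the combined field of covariance $\zdm^{-1}(\alpha^1\cdot\alpha^2)$, the standard Ruelle-cascade identification at $t=0$ gives $\PP(\zdm,\lambda)$, not $\PP\bigl(\zdm,\lambda+\tfrac{\xi'(u)-\xi'(\zeta^{-1}(1))}{2}\bigr)$: the shift is not produced by the $t=0$ computation, but only appears when one passes from the $\zeta$ with $\zeta^{-1}(1)=u$ (the only ones for which the diagonal control works) to general $\zeta\in\MM_{0,u}$ via the continuous extension (\ref{PPcext}). Second, ``Guerra's convexity bound'' requires $R_{1,2}\ge 0$ asymptotically when $\xi$ is convex only on $\R_+$, which the paper enforces by a Ghirlanda--Guerra perturbation in the upper bound as well, not just in the cavity lower bound. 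Both of these are fixable, but as written your upper-bound step would not go through.
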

The proof of Theorem~\ref{ThParisiU} will be given in Section~\ref{s.parisi}. Before doing this, we show in the next section how to deduce Theorem~\ref{t.main} from Proposition~\ref{p.Parisi}. 

\section{Hopf-Lax representation}
\label{SecHLrepr}
In this section, we take the validity of Proposition~\ref{p.Parisi} for granted, and show that it implies Theorem~\ref{t.main}. We decompose the argument into five subsections.

\subsection{Change of variables} In (\ref{Parisifix}), let us make the change of variables
$$
\lambda\to \lambda  - \frac{\xi'(u)-\xi'(\zeta^{-1}(1))}{2}-\frac{1}{2}\zdm^{-1}(1)
=  \lambda  - \frac{1}{2}\xi'(u) -\frac{1}{2}\mu^{-1}(1),
$$
where the equality follows from (\ref{zdm}) with $x=1$. By the definition of $\Psi$ in \eqref{e.def.psi}, under this change of variables the second term in (\ref{Parisifix}) becomes 
$$
\PP\Bigl(\zdm,\lambda  -\frac{1}{2}\zdm^{-1}(1) \Bigr) = \Psi(\zdm, \lambda),
$$
and by cancelling and grouping other terms (recall that $\theta(u)=u\xi'(u)-\xi(u)$),
\begin{equation*}
\lim_{N\to\infty} F_N =
\sup_{u\in [d,D]}\inf_{\substack{\zeta\in \MM_{0,u} \\ \lambda\in\Reals}}\Bigl[
-\lambda u +
\Psi(\zdm,\lambda)
+ \frac{1}{2}\theta(u) -\frac{1}{2}\int\limits_0^u\! \zeta(s)\, \d \theta(s)
\Bigr],
\end{equation*}
where $F_N= F_N(1,\mu).$ An integration by parts gives that
\begin{equation*}
\lim_{N\to\infty} F_N =
\sup_{u\in [d,D]}\inf_{\substack{\zeta\in \MM_{0,u} \\ \lambda\in\Reals}}\Bigl[
-\lambda u +
\Psi(\zdm,\lambda )
+ \frac{1}{2}\int\limits_0^u\! \theta(s)\, \d \zeta(s)
\Bigr].
\end{equation*}
Since $\xi^*(\xi'(s))=\theta(s)$ for $s\geq 0$, if $U$ is a uniform random variable over $[0,1]$, we can write (again, recall (\ref{zdm}))
$$
\int\limits_0^u\! \theta(s)\, \d \zeta(s) = \int\limits_0^u\! \xi^*(\xi'(s))\, \d \zeta(s)
= \e \xi^*(\xi'(\zeta^{-1}(U)))
=  \e \xi^*(\zeta_\mu^{-1}(U)-\mu^{-1}(U)),
$$
and thus
\begin{equation*}
\lim_{N\to\infty} F_N =
\sup_{u\in [d,D]}\inf_{\substack{\zeta\in \MM_{0,u} \\ \lambda\in\Reals}}\Bigl[
-\lambda u +
\Psi(\zdm,\lambda )
+ \frac{1}{2} \e \xi^*(\zeta_\mu^{-1}(U)-\mu^{-1}(U))
\Bigr].
\end{equation*}
Notice that as $\zeta$ varies in $\in\MM_{0,u}$, the measures $\zeta_\mu$ defined in (\ref{zdm2}) span the set 
$$
\DD_u = \Bigl\{\nu \in \MM_b(\Reals_+)\,:\, \forall s, \nu(s)\leq \mu(s); \supp(\nu) \subseteq [0, \xi'(u)+\mu^{-1}(1)] \Bigr\},
$$
which means that, recalling that we write $X_\nu=\nu^{-1}(U)$,
\begin{equation}
\lim_{N\to\infty} F_N =
\sup_{u\in [d,D]}\inf_{\substack{\nu\in \DD_{u} \\ \lambda\in\Reals}}\Bigl[
-\lambda u +
\Psi(\nu,\lambda )
+ \frac{1}{2} \e \xi^*(X_\nu-X_\mu)
\Bigr].
\label{HLalmost}
\end{equation}

\subsection{Removing the constraint on the support}
Let us first show that we can remove the constraint $\supp(\nu) \subseteq [0, \xi'(u)+\mu^{-1}(1)]$ in $\DD_u$. without changing the value of the right side of \eqref{HLalmost}. For $\nu\in \MM_b(\Reals_+)$, let $\tilde\nu(s)=1$ for $s\geq \xi'(u)+\mu^{-1}(1)$ and $\tilde\nu(s)=\nu(s)$ otherwise. This corresponds to the truncation 
$$
X_{\tilde\nu}=\min(X_{\nu}, \xi'(u)+\mu^{-1}(1)).
$$ 
Let us show that
\begin{equation}
\xi^*(X_\nu-X_\mu) \geq \xi^*(X_{\tilde\nu}-X_\mu) + u\, |X_{\nu}-X_{\tilde\nu}|.
\label{starder}
\end{equation}
If $X_{\tilde\nu}=X_{\nu}$ then the two sides are equal. Otherwise, $X_{\nu}>X_{\tilde\nu}=\xi'(u)+\mu^{-1}(1)$. Since $\mu^{-1}(1)\geq X_\mu$, this implies that $X_{\nu}-X_\mu>X_{\tilde\nu}-X_\mu\geq \xi'(u)$. It remains to observe that ${\xi^*}'(s)={\xi'}^{-1}(s)\geq u$ if $s\geq \xi'(u)$, so (\ref{starder}) holds and
\begin{equation}
\e\xi^*(X_\nu-X_\mu) \geq \e\xi^*(X_{\tilde\nu}-X_\mu) + u\, \e|X_{\nu}-X_{\tilde\nu}|.
\end{equation}

On the other hand, if we define 
\begin{equation}
\Gamma_u(\nu):=\inf_{\lambda\in\Reals}\Bigl(-\lambda u + \Psi(\nu,\lambda)\Bigr),
\label{GammaDef}
\end{equation}
we will show below in Subsection \ref{SecLipc} below that
\begin{equation}
|\Gamma_u(\nu)-\Gamma_u(\tilde\nu)|\leq  \frac{u}{2} \e|X_\nu-X_{\tilde\nu}|.
\label{eqLipc}
\end{equation}
The last two inequalities imply that
$$
\Gamma_u(\tilde\nu) +  \frac{1}{2}\e \xi^*(X_{\tilde\nu}-X_\mu)\leq
\Gamma_u(\nu) +  \frac{1}{2}\e \xi^*(X_{\nu}-X_\mu),
$$
which means that the constraint  $\supp(\nu) \subseteq [0, \xi'(u)+\mu^{-1}(1)]$ can be removed and
\begin{equation}
\lim_{N\to\infty} F_N =
\sup_{u\in [d,D]}\inf_{\substack{\nu\in \DD_\mu \\ \lambda\in\Reals}}\Bigl[
-\lambda u +
\Psi(\nu,\lambda )
+ \frac{1}{2} \e \xi^*(X_\nu-X_\mu)
\Bigr],
\label{HLthere}
\end{equation} 
where $\DD_\mu := \bigl\{\nu \in \MM_b(\Reals_+)\,:\, \forall s, \nu(s)\leq \mu(s) \bigr\}.$

\subsection{Using convexity} 
Since $X_\nu-X_\mu\geq 0$ for $\nu\in \DD_\mu$, for such $\nu$ we have that $\e \xi^*(X_\nu-X_\mu)=\e \xi^{*}(|X_\nu-X_\mu|)$. Since $\xi^*(|s|)$ is convex and symmetric, $\e \xi^*(|X_\nu-X_\mu|)$ is a Wasserstein distance between $\mu$ and $\nu$ with the cost function $\xi^*(|x-y|)$ (see e.g. \cite[Theorem~2.18 and Remark~2.19(ii)]{villani}) and, therefore, $\e \xi^*(X_\nu-X_\mu)$ is convex in $\nu$ on $\DD_\mu$. Also, the Auffinger-Chen representation \cite{AC15} for the solution of equation (\ref{ParisiPDE}) (see also \cite{JT16}) implies that $\Psi(\nu,\lambda)$ is convex in $(\nu,\lambda)$. Therefore, by Sion's minimax theorem, see \cite[Corollary~3.3]{Sion}, we have
\begin{equation}
\lim_{N\to\infty} F_N =
\inf_{\substack{\nu\in \DD_\mu \\ \lambda\in\Reals}}\sup_{u\in [d,D]}
\Bigl[
-\lambda u +\Psi(\nu,\lambda )
+ \frac{1}{2} \e \xi^*(X_\nu-X_\mu)
\Bigr].
\label{HLthereSion}
\end{equation} 
Using that the boundary condition in \eqref{ParisiPDE} satisfies
\begin{equation*}
\log\int e^{\sigma x + \lambda \sigma^2} \, \d P_1(\sigma)
\geq
\log\int e^{\sigma x} \, \d P_1(\sigma) + \lambda d\1_{\{\lambda\geq 0\}}+\lambda D\1_{\{\lambda\leq 0\}},
\end{equation*}
we see that
$$
\sup_{u\in [d,D]}\Bigl[-\lambda u +\Psi(\nu,\lambda)\Bigr]
=
-\lambda d\1_{\{\lambda\geq 0\}}-\lambda D\1_{\{\lambda\leq 0\}} +\Psi(\nu,\lambda)\geq \Psi(\nu,0).
$$
This implies that infimum over $\lambda$ is achieved at $\lambda=0$ and,
recalling that $\psi(\nu)=\Psi(\nu,0)$,
\begin{equation}
\lim_{N\to\infty} F_N =
\inf_{\nu\in \DD_\mu}
\Bigl[\psi(\nu)+ \frac{1}{2} \e \xi^*(X_\nu-X_\mu)
\Bigr].
\label{HLDmu}
\end{equation}

\subsection{Removing the stochastic constraint.}
It remains to remove the constraint in $\DD_\mu,$ namely, $ \nu(s)\leq \mu(s)$. The reason we can do this is because, for arbitrary $\nu\in \MM_b(\Reals_+)$ and $\tilde\nu$ with the c.d.f. $\tilde\nu(s)=\min(\nu(s),\mu(s))$, we have
$$
\psi(\tilde\nu) \leq \psi(\nu) \mbox{ and }
\e \xi^*(X_{\tilde \nu}-X_\mu)  = \e \xi^*(X_\nu-X_\mu).
$$ 
The second equality holds because $\xi^*(s)=0$ for $s\leq 0$ and $X_{\tilde\nu}=\max(X_\nu,X_\mu)$. The first inequality follows from the monotonicity of $\psi$ in $\nu$, which can be seen as follows. First of all, in the definition of $\Psi(\nu,h)$ in \eqref{e.def.psi}, if we take any $a\geq \nu^{-1}(1)$ and let $\Phi^a(t,x)$ be the solution of the equation
\begin{equation}
\Ll\{
\begin{aligned}
& \dr_t \Phi^a = - \frac{1}{2}\bigl(\dr_x^2 \Phi^a + \mu(t)(\dr_x \Phi^a)^2\bigr)
&  \mbox{on } [0,a]\times \Reals,
\\
& \Phi^a(a,x)=\log\int_{\R} \exp\Ll(\sigma x + \lambda \sigma^2\Rr) \, \d P_1(\sigma) &  \text{for } x \in \R,
\end{aligned}
\Rr.
\label{ParisiPDEa}
\end{equation}
and define $\PP^a(\nu,\lambda):= \Phi^a(0,0)$, then
\begin{align}
\Psi(\nu,h)&=\PP\Bigl(\nu,h-\frac{1}{2}\nu^{-1}(1)\Bigr)
=\PP^a\Bigl(\nu,h-\frac{1}{2}a\Bigr).
\label{psidef2}
\end{align}
This means that when we compare $\Psi(\nu,\lambda)$ and $\Psi(\nu',\lambda)$, we can solve the PDE on the same interval that includes the support of both measures. Since the solution is monotone in the c.d.f. $\nu(s)$, monotonicity of the mapping $\nu \mapsto \Psi(\nu,\lambda)$ follows. This proves
\begin{equation}
\lim_{N\to\infty} F_N =
\inf_{\nu\in \MM_b(\R_+)}
\Bigl[\psi(\nu)+ \frac{1}{2} \e \xi^*(X_\nu-X_\mu)
\Bigr].
\label{HLDM}
\end{equation}
Finally, rescaling $\xi\to t\xi$ and recalling the definition of $\psi$ in \eqref{e.def.psi}, we get
\begin{align}
\lim_{N\to\infty} F_N(t,\mu) 
&= \inf_{\nu\in\MM_b(\Reals_+)}\Bigl[ \psi(\nu)+\frac{t}{2}\e \xi^*\Bigl(\frac{X_\nu-X_\mu}{t}\Bigr)
\Bigr].
\end{align}
This finishes the proof that Proposition~\ref{p.Parisi} implies Theorem \ref{t.main}, up to the verification of (\ref{eqLipc}).

\subsection{Lipschitz continuity}\label{SecLipc}

We now show (\ref{eqLipc}). Let us recall the definition of the set in (\ref{eqOmega}) and define
\begin{align*}
f_N(\nu,\eps)&=
\frac{1}{N}\e \log  \int\!\!\!\!\! \int\limits_{\Omega^\eps_N(u)}  \exp\Bigl(\sum_{i=1}^{N}\sigma_iz_i^\nu(\alpha)-\frac{1}{2}\nu^{-1}(1)|\sigma|^2
\Bigr)\, \d P_N(\sigma) \rpc(\alpha).
\end{align*}
Let us first suppose that $\nu$ is discrete. If we recall (\ref{GammaDef}), the results of \cite[Section~7]{SKvector} (specialized to the one-dimensional case) show that, for discrete $\nu$,
\begin{equation}
\Gamma_u(\nu)=\lim_{\eps\downarrow 0} \lim_{N\to\infty} f_N(\nu,\eps).
\label{Sec7SKvector}
\end{equation}
Given $\nu,\tilde\nu\in\MM_b(\Reals_+)$, we can interpolate between $f_N(\nu,\eps)$ and $f_N(\tilde\nu,\eps)$ by replacing $z_i^\nu(\alpha)$ by $\sqrt{t}z_i^\nu(\alpha)+\sqrt{1-t}z_i^{\tilde\nu}(\alpha)$ with covariance $(t\nu^{-1}+(1-t)\tilde\nu^{\,-1})(\alpha^1\cdot\alpha^2)$ and replacing $\nu^{-1}(1)$ by $t\nu^{-1}(1)+(1-t)\tilde\nu^{\,-1}(1)$. Then the derivative of $f_N(\nu,\eps)$ in $t $ along this interpolation path equals
$$
-\frac{1}{2}\e\la R_{1,2}(\nu^{-1}(\alpha^1\cdot\alpha^2)-\tilde\nu^{\,-1}(\alpha^1\cdot\alpha^2)) \ra,
$$
where $\la\,\cdot\,\ra$ is the average with respect to the Gibbs measure 
\begin{multline*}
\d G_N(\sigma,\alpha) 
\sim \exp\Bigl(\sum_{i=1}^{N}\sigma_i\bigl(\sqrt{t}z_i^\nu(\alpha)+\sqrt{1-t}z_i^{\tilde\nu}(\alpha)\bigr)
\\
-\frac{1}{2}\bigl(t\nu^{-1}(1)+(1-t)\tilde\nu^{\,-1}(1)\bigr) |\sigma|^2
\Bigr)\, \d P_N(\sigma) \rpc(\alpha)
\end{multline*}
on $ \Omega^\eps_N(u)\times \supp(\mfk R)$. Since, by Cauchy's inequality, $|R_{1,2}|\leq u+\eps$ for $\sigma^1,\sigma^2\in \Omega^\eps_N(u)$, the above derivative is bounded by
$$
\frac{u+\eps}{2}\e\la \bigl|\nu^{-1}(\alpha^1\cdot\alpha^2)-\tilde\nu^{\,-1}(\alpha^1\cdot\alpha^2)\bigr| \ra
=
\frac{u+\eps}{2}\e |X_\nu-X_{\tilde\nu}|,
$$
where we also used the fact that the distribution of $\alpha^1\cdot\alpha^2\sim U[0,1]$ under $\e G_N^{\otimes 2}$ is the same as under $\e\mathfrak{R}^{\otimes 2}$ by the properties of the Ruelle probability cascades (see e.g. \cite[Theorem 4.4]{SKmodel}). This and (\ref{Sec7SKvector}) imply (\ref{eqLipc}) for discrete $\nu$, and by extension for all $\nu\in\MM_b(\Reals_+)$.

\section{Proof of the Parisi formula}
\label{s.parisi}

The goal of this section is to prove Theorem \ref{ThParisiU}, which we recall implies Proposition~\ref{p.Parisi}. We first prove the upper and then the lower bound.

\subsection{Upper bound}
The upper bound is proved by the standard Guerra replica-symmetry-breaking interpolation \cite{Guerra}. By Lipschitz continuity (\ref{FElip}), it is enough to consider discrete $\mu$ and suppose that the infimum in Theorem \ref{ThParisiU} is taken also over discrete distributions $\zeta\in \MM_{0,u}$ such that $\zeta^{-1}(1)=u.$ Given such $\zeta$, let $z(\alpha)$ and $y(\alpha)$ be independent Gaussian processes (conditionally on $\mfk R$) indexed by $\alpha\in\supp(\mfk R)$ with covariances
$$
\e z(\alpha^1)z(\alpha^2)=\xi'(\zeta^{-1}(\alpha^1\cdot\alpha^2)),\,\,
\e y(\alpha^1)y(\alpha^2)=\theta(\zeta^{-1}(\alpha^1\cdot\alpha^2)),
$$ 
and let $z_i(\alpha)$ be independent copies of $z(\alpha)$ for $i\geq 1$. We assume these processes to also be independent of $H_N$ and $z_i^\mu(\alpha)$, conditionally on $\mfk R$. Consider an interpolating free energy, for $t\in [0,1],$
\begin{equation}
\varphi(t) := \frac 1 N \E \log \int\!\!\!\!\! \int\limits_{\Omega^\eps_N(u)}  \exp (H_{N,t}(\sigma,\alpha))\, \d P_N(\sigma)\, \rpc(\alpha).
\end{equation}
where the interpolation Hamiltonian is defined by
$$
H_{N,t}(\sigma,\alpha) := \sqrt{t}H_N(\sigma)  + \sqrt{1-t}\sum_{i=1}^{N}\sigma_iz_i(\alpha)+\sqrt{t}\sqrt{N}y(\alpha) + \sum_{i=1}^{N}\sigma_iz_i^\mu(\alpha).
$$
One can see that
\begin{multline*}
\frac{2}{N}\e \, \dr_t H_{N,t}(\sigma^1,\alpha^1) \, H_{N,t}(\sigma^2,\alpha^2)
=
\Delta(R_{1,2},\alpha^1\cdot\alpha^2)
\\
:=
\xi(R_{1,2})-R_{1,2}\xi'(\zeta^{-1}(\alpha^1\cdot\alpha^2))+\theta(\zeta^{-1}(\alpha^1\cdot\alpha^2)).
\end{multline*}
Since $R_{1,1}=N^{-1}|\sigma^1|^2\in (u-\eps,u+\eps)$ whenever $\sigma^1\in \Omega^\eps_N(u)$ and we also assumed that $\zeta^{-1}(\alpha^1\cdot \alpha^1)=\zeta^{-1}(1)=u,$ we have $|\Delta(R_{1,1},\alpha^1\cdot\alpha^1)|=\bigO\eps.$ Therefore, by the usual Gaussian integration by parts,
$$
\varphi'(t)=-\frac{1}{2}\e\la \Delta(R_{1,2},\alpha^1\cdot\alpha^2) \ra + \bigO\eps,
$$
where $\la\,\cdot\,\ra$ is the average with respect to the Gibbs measure 
$$
\d G_N(\sigma,\alpha) 
\sim \exp( H_{N,t}(\sigma,\alpha))\, \d P_N(\sigma) \rpc(\alpha)
$$
on $ \Omega^\eps_N(u)\times \supp(\mfk R)$. When $\xi$ is convex, $\xi(a)-a\xi'(b)+\theta(b)\geq 0$, which proves that $\varphi(1)\leq \varphi(0) + \bigO\eps.$ When $\xi$ is only convex on $\Reals_+$, one can add a small perturbation that enforces the Ghirlanda-Guerra identities and, as a result, enforces asymptotic positivity of $R_{1,2}$ (see \cite{T2} or \cite[Chapter~3]{SKmodel}). 

We can bound $\varphi(0)$ from above by adding $-N\lambda u+ \lambda |\sigma|^2$ to the Hamiltonian, which on $\Omega^\eps_N(u)$ is bounded in absolute value by $N|\lambda |\eps$ and, as a result,
\begin{align*}
\varphi(0)
&\leq |\lambda |\eps
-\lambda u + \frac 1 N \E \log\iint\! \exp\Bigl(
\sum_{i=1}^{N}\sigma_i \bigl(z_i(\alpha)+z_i^\mu(\alpha)\bigr)
+\lambda |\sigma|^2
\Bigr)\, \d P_N(\sigma) \rpc(\alpha)
\\
&=
 |\lambda |\eps -\lambda u +\E \log\iint\! \exp\Bigl(
\sigma_1\bigl(z_1(\alpha)+z_1^\mu(\alpha)\bigr)
+\lambda \sigma_1^2
\Bigr)\, \d P_1(\sigma_1) \rpc(\alpha)
\\
&=
 |\lambda |\eps -\lambda u + \PP\bigl(\zeta_\nu,\lambda\bigr),
\end{align*}
by the standard properties of the Ruelle probability cascades and the fact that $z_1(\alpha)+z_1^\mu(\alpha)$ has covariance $\zdm^{-1}(\alpha^1\cdot\alpha^2)$
, where $\zdm(s)$ was defined in (\ref{zdm}), (\ref{zdm2}). On the other hand, again, by the standard properties of the Ruelle probability cascades (recall the notation in (\ref{pNeps})),
\begin{align*}
\varphi(1) &=p_N^\eps(u)+ 
\frac 1 N \E \log \int\! \exp( \sqrt{N}y(\alpha))\,\rpc(\alpha)
\\
&=
p_N^\eps(u)+ 
\E \log \int\! \exp( y(\alpha))\,\rpc(\alpha)
=
p_N^\eps(u)+ \frac{1}{2}\int\limits_0^u\! \zeta(s)\, \d \theta(s).
\end{align*}
Putting everything together shows that 
\begin{equation}
\lim_{\eps\downarrow 0}\lim_{N\to\infty} p_N^\eps(u) \leq
-\lambda u + \PP(\zdm,\lambda)-\frac{1}{2}\int\limits_0^u\! \zeta(s)\, \d \theta(s),
\end{equation}
for all discrete distributions $\zeta\in \MM_{0,u}$ such that $\zeta^{-1}(1)=u.$ Since continuous extension of $\PP(\zdm,\lambda)$ to all $\zeta\in \MM_{0,u}$ not necessarily satisfying $\zeta^{-1}(1)=u$ is exactly 
\begin{equation}
\PP\Bigl(\zdm,\lambda + \frac{\xi'(u)-\xi'(\zeta^{-1}(1))}{2}\Bigr)
\label{PPcext}
\end{equation}
(this is analogous to why the term $-\frac{1}{2}\mu^{-1}(1)|\sigma|^2$ was included in the definition of $F_N(t,\mu)$), this finishes the proof of the upper bound.

\subsection{Lower bound} The proof of the lower bound is identical to the one-dimensional case of \cite{SKvector}, with some simplifications due to the one-dimensional nature of our problem and one minor modification to account for the presence of the term $\sum_{i=1}^{N}\sigma_iz_i^\mu(\alpha)$ that we will now explain. 

The main effect of this term is that the cavity fields (in the first term) of the Aizenman-Sims-Starr representation will be of the form $c_i(\sigma,\alpha):=z_{i}(\sigma)+z_i^\mu(\alpha)$ for $(\sigma,\alpha)\in \Omega^\eps_N(u)\times \supp(\mfk R)$ with covariance
\begin{equation}
C_{\ell,\ell'}:=\e c_i(\sigma^\ell,\alpha^\ell)c_i(\sigma^{\ell'},\alpha^{\ell'}) = \xi'(R_{\ell,\ell'})+\mu^{-1}(\alpha^\ell\cdot\alpha^{\ell'}).
\end{equation}
To understand the distribution of the array $(C_{\ell,\ell'})_{\ell,\ell'\geq 1}$ under the Gibbs measure that arises in the cavity computation, we can use the synchronization mechanism from \cite{multiSK}  to synchronize the overlaps $R_{1,2}$ and $q_{1,2}:=\mu^{-1}(\alpha^1\cdot\alpha^{2})$. This can be done by including terms in the perturbation Hamiltonian with covariances given by monomials $R_{1,2}^n q_{1,2}^m$ and then use Theorem 4 in \cite{multiSK} to show that both $R_{1,2}$ and $q_{1,2}$ are non-decreasing $1$-Lipschitz functions of their sum in the thermodynamic limit. 

If we think of the sum $S_{1,2}:=R_{1,2}+q_{1,2}$ as the quantile transform $\nu^{-1}(U)$ of $\nu=\LL(S_{1,2})$ and uniform $U\sim U[0,1]$, then both $R_{1,2}$ and $q_{1,2}$ are non-decreasing functions of $U$, which means they must be quantile transforms of their distributions. The distribution of $q_{1,2}$ is $\mu$ for all $N$ by the properties of the Ruelle probability cascades (\cite[Theorem 4.4]{SKmodel}) and, thus, in the limit. If the limiting distribution of $R_{1,2}$ (as usual, along some subsequence) is $\zeta$ then (recalling (\ref{zdm}))
$$
\xi'(R_{\ell,\ell'})+\mu^{-1}(\alpha^\ell\cdot\alpha^{\ell'})
\stackrel{d}{=} 
\xi'(\zeta^{-1}(U))+\mu^{-1}(U) = \zeta_\mu^{-1}(U).
$$
This means that $C_{\ell,\ell'}\sim \zeta_\mu$. Similarly, the cavity fields $y(\sigma)$ coming from the Onsager correction in the second term in the Aizenman-Sims-Starr scheme will have covariance
$$
\theta(R_{\ell,\ell'})\stackrel{d}{=} \theta(\zeta^{-1}(U)),
$$
in the thermodynamic limit. If $\zeta^{-1}(1)=u$ then the lower bound one obtains by the cavity computation is equal to
\begin{equation}
\inf_{\lambda\in\Reals}\Bigl[
-\lambda u +\PP(\zdm,\lambda)-\frac{1}{2}\int\limits_0^u\! \zeta(s)\, \d \theta(s)
\Bigr].
\end{equation}
For general $\zeta\in \MM_{0,u}$, we again appeal to the fact that (\ref{PPcext}) is a continuous extension from general $\zeta$ of $\PP(\zdm,\lambda)$ for $\zeta$ satisfying $\zeta^{-1}(1)=u.$

\section{Proof of Theorem \ref{t.mains}}
\label{s.mains}
The goal of this section is to prove Theorem~\ref{t.mains}. We obtain this result by combining Theorem~\ref{t.main} with the observation in \eqref{e.approx.hj} that $\FF_N$ satisfies a Hamilton-Jacobi equation, up to a small error term. Denote by $\la \,\cdot\, \ra$ the Gibbs measure
\begin{multline*}  
\d G_N(\sigma,\alpha) 
\sim 
\exp \Big( \sqrt{t} H_N(\sigma) + \sum_{i=1}^{N}\sigma_i z_i^\mu(\alpha)
\\
- \frac{1}{2}N(t-s)\xi(N^{-1} |\si|^2) - \frac{1}{2}\mu^{-1}(1)|\si|^2 + h |\si|^2\Big)  \, \d P_N(\sigma) \rpc(\alpha).
\end{multline*}
Similarly to the observations in \cite[Section~1]{HJinfer} concerning the Curie-Weiss model (with $\xi$ replaced by the square function there), we have
\begin{equation*}  
\dr_s \FF_N = \frac 1 2 \E \la \xi \Ll( N^{-1} |\si|^2 \Rr) \ra,
\end{equation*}
while
\begin{equation}  
\label{e.drh.FN}
\dr_h \FF_N = \E \la N^{-1} |\si|^2 \ra,
\end{equation}
and
\begin{equation}  
\label{e.drh2.FN}
\dr_h^2 \FF_N = N \, \E \la \Ll(N^{-1} |\si|^2  - \E \la N^{-1} |\si|^2 \ra \Rr)^2 \ra.
\end{equation}
Notice in particular that $\dr_h \FF_N \ge 0$, $\dr_h^2 \FF_N \ge 0$, and since the support of the measure $P_1$ is assumed to be bounded, the derivatives $\dr_s \FF_N$ and $\dr_h \FF_N$ are bounded uniformly in~$N$. Moreover, since $\xi$ is locally Lispschitz continuous, there exists a constant $C < \infty$ such that, for every~$N$,
\begin{align}  
\label{e.estim.approx.hj}
\Ll|2 \dr_s \FF_N - \xi(\dr_h \FF_N)\Rr| 
& = \Ll| \E \la \xi \Ll( N^{-1} |\si|^2 \Rr) \ra - \xi \Ll( \E \la N^{-1} |\si|^2 \ra \Rr) \Rr| 
\\
\notag
& \le C \E \la \Ll| ( N^{-1} |\si|^2 - \E \la N^{-1} |\si|^2 \ra  \Rr| \ra
\\
\notag
& \le C (N^{-1} \dr_h^2 \FF_N)^\frac 1 2 .
\end{align}
We fix $t \ge 0$ and $\mu \in \MM_b(\R_+)$, and denote by $f = f(s,h) : \R_+ \times \R \to \R$ the candidate limit for $\FF_N$, namely
\begin{equation}  
\label{e.def.f}
f(s,h) := \sup_{h'\in \R} \Ll( \hat \Psi(h')  - \frac{s}{2}\xi^* \Ll( \frac{2(h'-h)}{s} \Rr) \Rr) ,
\end{equation}
where we set
\begin{equation*}  
\hat \Psi(h') :=  \inf_{\nu \in \MM_b(\R_+)} \Ll( \Psi(\nu,h') +  \frac{t}{2} \E \xi^* \Ll( \frac{X_\nu - X_\mu}{t} \Rr) \Rr).
\end{equation*}
Notice that we do not display the dependency of $f$ and $\hat \Psi$ on $t \ge 0$ and $\mu \in \MM_b(\R_+)$; we allow ourselves to do this since these parameters will be kept fixed throughout the section. For the same reason, from now on, \emph{we write $\FF_N(s,h)$ in place of $\FF_N(s,t,\mu,h)$.}

\smallskip

Recalling that, by Theorem~\ref{t.main}, the quantity $\hat \Psi(h')$ is the limit of $\FF_N(0,h')$, and using \eqref{e.drh.FN} and \eqref{e.drh2.FN}, it is clear that $\hat \Psi$ is uniformly Lipschitz continuous, nondecreasing, and convex. One can check that these properties transfer to the function $f$: it is uniformly Lipschitz continuous over $\R_+ \times \R$, and for each fixed $s \ge 0$, the mapping $h \mapsto f(s,h)$ is nondecreasing and convex (see for instance \cite[Lemmas~I.3.3.2 and I.3.3.3]{evans}). In particular, by the Rademacher theorem, the function~$f$ is differentiable almost everywhere. Moreover, the expression for $f$ in \eqref{e.def.f} is a Hopf-Lax formula; as a consequence, see \cite[Theorem~I.3.3.5]{evans}, for every $(s,h) \in (0,\infty) \times \R$, if $f$ is differentiable at $(s,h)$, then
\begin{equation}
\label{e.exact.hj}
2 \dr_s f(s,h) - \xi(\dr_h f(s,h)) = 0.
\end{equation}

\smallskip

Our goal is to show that $\FF_N(s,h)$ converges to $f(s,h)$. While we refrain  from writing down a general statement, we list here all the properties of these functions that will be used below: 
\begin{enumerate}
\item the functions are uniformly Lipschitz, with a common Lipschitz constant; \item the functions are nondecreasing and convex in $h$; 
\item for each $h$, we have $\lim_{N \to \infty} \FF_N(0,h) = f(0,h)$;
\item the function $f$ satisfies the equation \eqref{e.exact.hj} almost everywhere, while the function $\FF_N$ satisfies the same equation, up to an error that we will show to be small after integration in $h$, uniformly over $s$. 
\end{enumerate}

\begin{proof}[Proof of Theorem~\ref{t.mains}]
We split the proof into two steps. 

\smallskip

\emph{Step 1.} We write down an equation for the difference between $\FF_N$ and $f$ and state some elementary bounds. We denote
\begin{equation*}  
w_N := \FF_N - f \quad \text{and} \quad \err := 2 \dr_s \FF_N - \xi(\dr_h \FF_N),
\end{equation*}
so that, almost everywhere in $\R_+ \times \R$,
\begin{align*}  
2 \dr_s w_N 
& = \xi(\dr_h \FF_N) - \xi(\dr_h f) + \err
\\
\notag
& = \int_0^1 \dr_u \Ll( \xi(u\dr_h \FF_N + (1-u) \dr_h f) \Rr) \, \d u + \err
\\
\notag
& = \b_N \dr_h w_N + \err,
\end{align*}
where we have set
\begin{equation*}  
\b_N := \int_0^1 \xi'(u \dr_h \FF_N + (1-u) \dr_h f)  \, \d u.
\end{equation*}
Let $\phi \in C^\infty(\R)$ be a smooth function such that $\phi(0) = 0$ and $|\phi'| \le 1$, and define $v_N := \phi(w_N)$. By the chain rule, we have
\begin{equation}  
\label{e.equation.vN}
2\dr_s v_N - \b_N \dr_h v_N = \phi'(w_N) \err \qquad \text{a.e. in } \R_+ \times \R.
\end{equation}
It will be convenient to be allowed to differentiate $\b_N$ in $h$. In order to make this rigorous, we regularize $\b_N$ a bit, by convolution with a smooth kernel. Let $\zeta \in C^\infty_c(\R)$ be a smooth function with compact support such that $\int_\R \zeta = 1$, and for each $\ep > 0$, $s \ge 0$ and $h \in \R$, denote 
\begin{equation*}  
f_\ep (s,h) := \ep^{-1} \int_{\R} f(s,h-h') \zeta(\ep^{-1} h') \, \d h',
\end{equation*}
and
\begin{equation*}  
\b_{N,\ep} := \int_0^1 \xi'(u\dr_h \FF_N + (1-u) \dr_h f_\ep) \, \d u.
\end{equation*}
One can check that for each fixed $N$ and $s \ge 0$, the function $\b_{N,\ep}(s,\cdot)$ converges to $\b_N(s,\cdot)$ almost everywhere in $\R$ as $\ep$ tends to zero (see for instance \cite[Theorem~C.5.7]{evans}). Moreover, 
\begin{equation*}  
\dr_h \b_{N,\ep} = \int_0^1 (u \dr_h^2 \FF_N + (1-u) \dr_h^2 f_\ep) \xi''(u\dr_h \FF_N + (1-u) \dr_h f_\ep) \, \d u,
\end{equation*}
and since $\dr_h \FF_N$, $\dr_h f_\ep$, $\dr_h^2 \FF_N$, and $\dr_h^2 f_\ep$, are all nonnegative, and $\xi''$ maps $\R_+$ to $\R_+$, we deduce that
\begin{equation}
\label{e.div.b}
\dr_h \b_{N,\ep} \ge 0.
\end{equation}
Notice also that, since $\FF_N$ and $f$ are Lipschitz with a common Lipschitz constant, we have that $\|\b_{N,\ep}\|_{L^\infty}$ is bounded uniformly over $N$ and $\ep$. We write
\begin{equation*}  
R := 1+\sup_{N,\ep} \|\b_{N,\ep}\|_{L^\infty} < \infty.
\end{equation*}

\smallskip

\emph{Step 2.} We fix $S\ge 1$ for the remainder of the proof, and study the quantity
\begin{equation*}  
J_N(s) := \int_{-R(S-s)}^{R(S-s)} v_N(s,h) \, \d h, \qquad s \in \Ll[ 0,S \Rr] .
\end{equation*}
The function $J_N$ is Lipschitz continuous, and for almost every $s \in [0,S]$,
\begin{equation*}  
\dr_s J_N(s) = \int_{-R(S-s)}^{R(S-s)} \dr_s v_N(s,h) \, \d h - Rv_N(s,R(S-s)) -Rv_N(s,-R(S-s)).
\end{equation*}
By \eqref{e.equation.vN}, we also have
\begin{equation*}  
2\int_{-R(S-s)}^{R(S-s)} \dr_s v_N \, \d h 
= \int_{-R(S-s)}^{R(S-s)} \Ll(\b_{N,\ep} \dr_h v_N + (\b_N - \b_{N,\ep})\dr_h v_N + \phi'(w_N) \err \Rr) \, \d h,
\end{equation*}
where we kept it implicit in the notation that the functions in the integrands are evaluated at $(s,h)$.
We now estimate the contribution of each term on the right side in turn. By the definition of $R$ and an integration by parts, we have
\begin{equation*}  
\Ll| \int_{-R(S-s)}^{R(S-s)} \Ll(\b_{N,\ep} \dr_h v_N +v_N \dr_h \b_{N,\ep} \Rr) \, \d h \Rr| \le Rv_N(s,R(S-s)) +Rv_N(s,-R(S-s))
\end{equation*}
(recall that $v_N \ge 0$). Using also \eqref{e.div.b}, we deduce that
\begin{equation*}  
\int_{-R(S-s)}^{R(S-s)} \b_{N,\ep} \dr_h v_N \, \d h - Rv_N(s,R(S-s)) -Rv_N(s,-R(S-s)) \le 0.
\end{equation*}
Recalling also that for each fixed $N$, we have that $\b_{N,\ep}(s\cdot)$ converges to $\b_N(s,\cdot)$ almost everywhere, and using the dominated convergence theorem, we see that
\begin{equation*}  
\lim_{\ep \to 0} \int_{-R(S-s)}^{R(S-s)}  (\b_N - \b_{N,\ep})\dr_h v_N \, \d h = 0.
\end{equation*}
Summarizing, we have shown that, for almost every $s \in [0,S]$, 
\begin{equation*}  
\dr_s J_N(s) \le \frac 1 2 \int_{-R(S-s)}^{R(S-s)} \phi'(w_N) \err \, \d h.
\end{equation*}
Recalling that $|\phi'| \le 1$ and using \eqref{e.estim.approx.hj}, we deduce that
\begin{equation*}  
\dr_s J_N(s) \le C N^{-\frac 1 2} \int_{-R(S-s)}^{R(S-s)} \Ll(\dr_h^2 \FF_N\Rr)^\frac 1 2 \, \d h.
\end{equation*}
Allowing the constant $C$ to depend on $R$ and $S$, we can use Jensen's inequality to deduce that, for almost every $s \in [0,S]$, 
\begin{align*}  
\dr_s J_N(s) \le C N^{-\frac 1 2} \Ll(\int_{-R(S-s)}^{R(S-s)} \dr_h^2 \FF_N \, \d h\Rr)^\frac 1 2,
\end{align*}
and since $\FF_N$ is Lipschitz uniformly over $N$, the integral on the right side is bounded. To sum up, we have thus shown that for every $s \in [0,S]$,
\begin{align*}  
J_N(s) = J_N(0) + \int_0^s \dr_s J_N(r) \, \d r
\le J_N(0) + C s N^{-\frac 1 2}.
\end{align*}
Recalling the definition of $v_N$, fixing $s = \frac S 2$, this implies in particular, up to a redefinition of $C < \infty$,
\begin{equation*}  
\int_{-\frac{RS}{2}}^{\frac{RS}{2}} \phi(\FF_N-f)(s,h) \, \d h \le \int_{-\frac{RS}{2}}^{\frac{RS}{2}} \phi(\FF_N-f)(0,h) \, \d h + C N^{-\frac 1 2}.
\end{equation*}
Notice also that the constant $C < \infty$ does not depend on our choice of function $\phi$ such that $|\phi'| \le 1$. We thus deduce that 
\begin{equation*}  
\int_{-\frac{RS}{2}}^{\frac{RS}{2}} \Ll|\FF_N-f\Rr|(s,h) \, \d h \le \int_{-\frac{RS}{2}}^{\frac{RS}{2}} \Ll|\FF_N-f\Rr|(0,h) \, \d h + C N^{-\frac 1 2}.
\end{equation*}
Finally, by the dominated convergence theorem, the integral on the right side converges to $0$ as $N$ tends to infinity. We have thus shown that
\begin{equation*}  
\lim_{N \to \infty} \int_{-\frac{RS}{2}}^{\frac{RS}{2}} \Ll|\FF_N-f\Rr|(s,h) \, \d h = 0.
\end{equation*}
Recall that $R > 0$ and that our choice of $S \ge 1$ was arbitrary. To conclude for the pointwise convergence of $\FF_N$ to $f$, it suffices to use the Lipschitz regularity of $\FF_N$. Explicitly, for every $\ep > 0$, we can write
\begin{equation*}  
\FF_N(s,h) = \FF_N(s,h) - \frac{1}{2\ep} \int_{-\ep}^\ep \FF_N(s,h') \, \d h' + \frac 1 {2\ep} \int_{-\ep}^\ep\FF_N(s,h') \, \d h',
\end{equation*}
and we have seen above that the last integral converges to the corresponding integral with $\FF_N$ replaced by $f$ as $N$ tends to infinity. Moreover, by the Lipschitz continuity of $\FF_N$,
\begin{equation*}  
\Ll| \FF_N(s,h) - \frac{1}{2\ep} \int_{-\ep}^\ep \FF_N(s,h') \, \d h' \Rr| \le C \ep.
\end{equation*}
Hence, sending $N$ to infinity first and then $\ep$ to zero allows us to conclude that for each $s \ge 0$ and $h \in \R$, we have indeed $\lim_{N \to \infty} \FF_N(s,h) = f(s,h)$.
\end{proof}

\noindent \textbf{Acknowledgements.} JCM was partially supported by the ANR grants LSD (ANR-15-CE40-0020-03) and Malin (ANR-16-CE93-0003). DP was partially supported by NSERC.

\small

\begin{thebibliography}{999}




\bibitem{Agliari} Agliari, E., Barra, A., Burioni, R., Di Biasio, A.: Notes on the $p$-spin glass studied via Hamilton-Jacobi and smooth-cavity techniques. J. Math. Phys. 53, no. 6, 063304 (2012)
 
\bibitem{AC15}
Auffinger, A., Chen, W.-K.: The {P}arisi formula has a unique minimizer.
Comm. Math. Phys. 335, no. 3, 1429--1444 (2015)

\bibitem{Barra} Barra, A., Di Biasio, A., Guerra, F.: Replica symmetry breaking in mean-field spin glasses through the Hamilton–Jacobi technique. J.  Stat. Mech. Theory E, 2010.09 (2010): P09006.

\bibitem{ConMin} Contucci, P., Mingione, E.: A multi-scale spin-glass mean-field model. Comm. Math. Phys. {268}, no. 3, 1323--1344 (2018)

\bibitem{evans}
L.~C. Evans.
\newblock {\em Partial differential equations}, volume~19 of {\em Graduate
  Studies in Mathematics}.
\newblock American Mathematical Society, Providence, RI, second edition, 2010.


\bibitem{Skor} Gikhman I.I., Skorokhod, A.V.: {\em The theory of stochastic processes I}. Classics in Mathematics. Springer-Verlag, New York (2004)

\bibitem{GuerraSum} Guerra, F.: Sum rules for the free energy in the mean field spin glass model, Mathematical physics in mathematics and physics (Siena, 2000). Fields Inst. Commun. 30, 161--170 (2001)

\bibitem{Guerra} Guerra, F.: Broken replica symmetry bounds in the mean field spin glass model. Comm. Math. Phys. {233}, no. 1, 1--12 (2003)

\bibitem{JSK17} Jagannath, A., Ko, J., Sen, S.: A connection between MAX $\kappa$-CUT and the inhomogeneous Potts spin glass in the large degree limit. Ann. of Applied Probab. 28, no. 3, 1536--1572 (2018)

\bibitem{JT16} Jagannath, A.,  Tobasco, I.: A dynamic programming approach to the {P}arisi functional. Proc. Amer. Math. Soc. 144, no. 7, 3135--3150 (2016)

\bibitem{Ko18} Ko, J.: Free energy of multiple systems of spherical spin glasses with constrained overlaps. arXiv:1806.09772 (2018)

\bibitem{MPV}
M.~M{\'e}zard, G.~Parisi, and M.~Virasoro.
\newblock {\em Spin glass theory and beyond: an introduction to the replica
  method and its applications}, volume~9.
\newblock World Scientific Publishing Company, 1987

\bibitem{HJinfer}
Mourrat, J.-C.: Hamilton-{J}acobi equations for mean-field disordered systems.
arXiv:1811.01432 (2018)

\bibitem{HJrank}
Mourrat, J.-C.: Hamilton-{J}acobi equations for finite-rank matrix inference.
arXiv:1904.05294 (2019)

\bibitem{Mourrat19} Mourrat, J.-C.: Parisi's formula is a Hamilton-Jacobi equation in Wasserstein space. arXiv: 1906.08471 (2019)

\bibitem{Pan05}
Panchenko, D.: Free energy in the generalized {S}herrington--{K}irkpatrick mean field model. Rev. Math. Phys. 17, no. 7, 793--857 (2005)

\bibitem{PUltra} Panchenko, D.: The Parisi ultrametricity conjecture. Ann. of Math. (2) {177}, no. 1, 383--393 (2013)

\bibitem{PanPF}
Panchenko, D.: The {P}arisi formula for mixed {$p$}-spin models. Ann. Probab. 42, no. 3, 946--958 (2014)

\bibitem{SKmodel} Panchenko, D.: {\em The Sherrington-Kirkpatrick Model}. Springer Monographs in Mathematics. Springer-Verlag, New York (2013)

\bibitem{multiSK} Panchenko, D.: Free energy in the multi-species Sherrington-Kirkpatrick model, {Ann. Probab.}, 43, no. 6, 3494--3513 (2015) 

\bibitem{SKpotts} Panchenko, D.:  Free energy in the Potts spin glass. {Ann. Probab.} 46, no. 2,  829--864 (2018)

\bibitem{SKvector} Panchenko, D.: Free energy in the mixed $p$-spin models with vector spins. {Ann. Probab.} 46, no. 2, 865--896 (2018) 

\bibitem{Parisi79} Parisi, G.: Infinite number of order parameters for spin-glasses. Phys. Rev. Lett. {43}, 1754--1756 (1979)

\bibitem{Parisi} Parisi, G.: A sequence of approximate solutions to the S-K model for spin glasses. J. Phys. A {13}, L-115 (1980) 

\bibitem{RPC}
Ruelle, D.:  A mathematical reformulation of {D}errida's {REM} and {GREM}. Comm. Math. Phys. 108, no. 2, 225--239 (1987)

\bibitem{Sion} Sion, M.: On general minimax theorems. Pacific Journal of Mathematics 8, no.1, 171--176 (1958)

\bibitem{SK} Sherrington, D., Kirkpatrick, S.: Solvable model of a spin glass. Phys. Rev. Lett. {35}, 1792--1796 (1975)

\bibitem{T2} Talagrand, M.:  On Guerra's broken replica-symmetry bound.  C. R. Math. Acad. Sci. Paris {337}, no. 7, 477--480 (2003)

\bibitem{PM} Talagrand, M.: Parisi measures. J. Funct. Anal. {231}, no. 2, 269--286 (2006)

\bibitem{TPF} Talagrand, M.: The Parisi formula. Ann. of Math. (2) {163}, no. 1, 221--263 (2006)

\bibitem{SG} Talagrand, M.: {\em Mean-Field Models for Spin Glasses}. Ergebnisse der Mathematik und ihrer Grenzgebiete, 3.\ Folge, vol. 54, 55. Springer-Verlag (2011)

\bibitem{villani}
C.~Villani.
\newblock {\em Topics in optimal transportation}, volume~58 of {\em Graduate
  Studies in Mathematics}.
\newblock American Mathematical Society, Providence, RI, 2003.


\end{thebibliography}

\end{document}